\documentclass[12pt]{article}
\usepackage{mathrsfs}
\usepackage{amsmath}
\usepackage{amsmath,amsthm,amssymb,amscd}
\usepackage{latexsym}
\usepackage[colorlinks,linkcolor=blue,anchorcolor=blue,citecolor=blue,CJKbookmarks=True]{hyperref}
\usepackage[numbers,sort&compress]{natbib}
\usepackage{cases}
\usepackage{makecell}
\usepackage{multirow}
\usepackage{geometry}
\usepackage{tabularx}
\usepackage{graphicx}
\usepackage{tikz}
\usepackage{tikz-cd}
\usepackage{titlesec}

\titleformat{\section}[block]
{\bfseries\large}
{\thesection\quad}
{0em}
{}

\allowdisplaybreaks

\newtheorem{theorem}{Theorem}[section]
\newtheorem{corollary}[theorem]{Corollary}
\newtheorem{lemma}[theorem]{Lemma}
\newtheorem{example}[theorem]{Example}
\newtheorem{proposition}[theorem]{Proposition}

\theoremstyle{definition}
\newtheorem{definition}[theorem]{Definition}
\newtheorem{remark}[theorem]{Remark}

\numberwithin{equation}{section}

\begin{document}
\begin{center}
{\large  \bf Notes on Reversed Products of Two Elements in Rings}\\
\vspace{0.8cm}   Long Wang\footnote{Email: lwangmath@yzu.edu.cn}, Tingting Li and Yuheng Liu \\
\vspace{0.5cm} {\small School of Mathematical Sciences, Yangzhou University, Yangzhou,  China}
\end{center}

\bigskip

{ \bf  Abstract:}  \leftskip0truemm\rightskip0truemm
This paper investigates reversed product properties of two elements in rings.
Motivated by Cline's formula,
we characterize reversible and $\ast$-reversible rings in terms of group invertible elements, EP elements,
and the transfer behaviour of generalized inverses for reversed products.
We prove that a unital ring $R$ is reversible if and only if $ab\in R^{\sharp}$ yields $ba\in R^{\sharp}$.
For an involutive ring $R$, $R$ is $\ast$-reversible precisely whenever $ab\in R^{\mathrm{EP}}$ implies $b^{\ast}a\in R^{\mathrm{EP}}$.
Several counterexamples are constructed to differentiate these ring classes,
and the mutual inclusion relations among them are also discussed.
\\{  \textbf{Keywords:}} reversible rings; $\ast$-reversible rings; group inverses; Moore-Penrose inverses;
Dedekind-finite rings.
\\\noindent { \textbf{2010 Mathematics Subject Classification:}} 16W10, 16U80, 16N40.
 \bigskip

\section{Introduction}

The theory of generalized inverses constitutes an essential branch of noncommutative ring-theoretic research,
with wide-ranging applications in matrix analysis, operator algebras and other related fields.
Among all types of generalized inverses,
the Drazin inverse and the Moore-Penrose inverse are two of the most essential and well-studied notions.
As a renowned theorem for Drazin inverses, Cline's formula reveals the deep-seated connection
between the generalized invertibility of the products $ab$ and $ba$ over arbitrary rings \cite{C1}.
This celebrated result has inspired abundant investigations devoted to the transfer behaviour
of different generalized inverses for reversed products,
covering Drazin inverses, generalized Drazin inverses and Moore-Penrose inverses \cite{LZ1,LCC1,Ma1,MZ1,Mo1,SCLZ1}.

Most existing literature focuses on element-level questions.
Given two fixed ring elements $a,b$,
scholars aim to find necessary and sufficient conditions under which the generalized
invertibility of $ab$ passes over to $ba$.
These studies regard such transfer property as a local feature of separate ring elements.
Different from previous studies, the present work shifts
the research perspective from local element properties to ring-structural characteristics.
Rather than merely discussing the inheritance of generalized invertibility between reversed products,
we employ the transfer laws of group-invertible elements and EP elements as tools to characterize ring structures.

Reversible rings constitute a fundamental class of rings defined by zero-product conditions \cite{Co1},
which are closely related to annihilator conditions, idempotent elements,
Dedekind finiteness, and Abelian properties \cite{AC1,FN1,KL1,M1}.
When the ring is endowed with an involution, the classical reversible ring concept is naturally
extended to $\ast$-reversible rings \cite{FN1},
which are well adapted to the research framework of Moore-Penrose inverses in involutive rings.

Inspired by the Cline-type transfer rules for reversed product pairs,
we establish new equivalent characterizations for reversible rings and $\ast$-reversible rings in this paper.
We prove that a unital ring $R$ is reversible if and only if group invertibility
transfers from $ab$ to $ba$ for all $a,b\in R$. For a $\ast$-ring $R$,
the $\ast$-reversible property can be characterized via the transfer behavior of
EP elements: $R$ is $\ast$-reversible if and only if the EP property
transfers from $ab$ to $b^{\ast}a$ for all $a,b\in R$.

Moreover, we construct several concrete counterexamples to distinguish reversible rings,
$\ast$-reversible rings, and other closely related ring classes,
and further clarify the inclusion relations among these ring families.
Notably, the core contribution of this paper is the structural characterization of rings,
rather than a trivial discussion on the inheritance of invertibility or commutativity for element products.

\section{Preliminaries}
Throughout this paper, all rings are associative with unity.
In this section, we recall some basic definitions and notations required throughout the paper.

An element $a\in R$ is Drazin invertible \cite{Dr1} if there exists $x\in R$ such that
\[
a^{k}xa=a^{k},\quad xax=x,\quad ax=xa
\]
for some positive integer $k$. The minimal such integer $k$ is called the Drazin index of $a$,
and the unique element $x$ is termed the Drazin inverse of $a$, denoted by $a^{D}$.
In particular, $a$ admits a group inverse if and only if its Drazin index equals $1$.
The group inverse is uniquely determined and denoted by $a^{\sharp}$.
We write $R^{\sharp}$ for the set of all group-invertible elements of $R$.

An involution on a ring $R$ is an order-$2$ anti-isomorphism $a\mapsto a^{\ast}$ satisfying
\[
(a^{\ast})^{\ast}=a,\quad (a+b)^{\ast}=a^{\ast}+b^{\ast},\quad (ab)^{\ast}=b^{\ast}a^{\ast}.
\]
A ring equipped with an involution is called a $\ast$-ring.

An element $a\in R$ of a $\ast$-ring is Moore-Penrose invertible \cite{PT1} if there exists $x\in R$ such that
\[
axa=a,\quad xax=x,\quad (ax)^{\ast}=ax,\quad (xa)^{\ast}=xa.
\]
The unique such element $x$ is the Moore-Penrose inverse of $a$,
denoted by $a^{\dag}$. We denote by $R^{\dag}$ the set of all Moore-Penrose invertible elements of $R$.

Let $R$ be a $\ast$-ring. An element $a\in R$ is said to be an EP element
if $a\in R^{\sharp}\cap R^{\dag}$ and $a^{\sharp}=a^{\dag}$.
The set of all EP elements of $R$ is denoted by $R^{\mathrm{EP}}$.
An element $p\in R$ is called a projection if $p^{2}=p=p^{\ast}$.
We write $P(R)$ for the set of all projections of $R$.

\section{Characterizations of Reversible Rings by Group Invertibility}
We begin with the definition and fundamental properties of reversible rings.
Let \(R\) be a unital ring.
A ring \(R\) is reversible \cite{Co1} if \(ab=0\) implies \(ba=0\) for all \(a,b\in R\),
which is equivalent to the condition that the left annihilator \(l(b)\)
equals the right annihilator \(r(b)\) for each \(b\in R\).

Let \(E(R)\) denote the set of all idempotent elements of \(R\).
It was shown in \cite{CWZ1} that a ring \(R\) is reversible
if and only if \(ab\in E(R)\) implies \(ba\in E(R)\),
and this condition is further equivalent to \(ab\in E(R)\) yielding \(ab=ba\).
Furthermore, for a \(\ast\)-ring \(R\), \(R\) is reversible
if and only if \(ab\in P(R)\) implies \(ba\in P(R)\),
or equivalently, \(ab\in P(R)\) implies \(ab=ba\) \cite{CWZ1}.

Based on the invertibility transfer of ring elements,
we further consider Dedekind-finite rings,
another essential class of rings closely related to product invertibility.
Let \(U(R)\) stand for the set of all invertible units of the ring \(R\).
A ring \(R\) is Dedekind-finite if \(ab=1\) implies \(ba=1\) for all \(a,b\in R\).
A natural question arises: for arbitrary elements \(a,b\in R\),
whether \(ab\in U(R)\) can guarantee \(ba\in U(R)\),
and what kinds of rings satisfy such product invertibility transfer property.

\begin{example}
Let $R$ be the ring of infinite matrices over the real number field $\mathbb{R}$, which are
row and column-finite.
Choose
\begin{eqnarray*}
a=\left(
     \begin{array}{cccc}
       0 & 1   &   &   \\
        & 0 & 1   &   \\
         &  & 0 & \ddots  \\
         &  &  & \ddots \\
     \end{array}
   \right), \ \ \
b=\left(
     \begin{array}{cccc}
       0 &   &   &   \\
       1 & 0 &   &   \\
         & 1 & 0 &   \\
         &   & \ddots & \ddots \\
     \end{array}
   \right).
\end{eqnarray*}
Then we compute:
\begin{eqnarray*}
ab=\left(
     \begin{array}{cccc}
       1 &   &   &   \\
         & 1 &   &   \\
         &   & 1 &   \\
         &   &   & \ddots \\
     \end{array}
   \right) \in U(R), \ \ \
ba=\left(
     \begin{array}{cccc}
       0 &   &   &   \\
         & 1 &   &   \\
         &   & 1 &   \\
         &   &   & \ddots \\
     \end{array}
   \right) \not\in U(R).
\end{eqnarray*}
\end{example}

\begin{proposition}\label{prop2.01}
Let $R$ be a ring with unity, and let $a,b\in R$. The following statements are equivalent:

(1) $R$ is Dedekind-finite

(2) For all $a,b\in R$, $ab\in U(R)$ implies $ba\in U(R)$.
\end{proposition}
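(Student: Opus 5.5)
The plan is to prove the two implications separately. The implication (2)$\Rightarrow$(1) is essentially immediate. The implication (1)$\Rightarrow$(2) reduces to producing one-sided inverses of $ba$ from the inverse of $ab$.

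For (2)$\Rightarrow$(1), suppose $ab=1$. Then $ab\in U(R)$, so by hypothesis $ba\in U(R)$. Now $(ba)^2=b(ab)a=ba$, so $ba$ is an idempotent. An invertible idempotent equals $1$: multiply $(ba)^2=ba$ by $(ba)^{-1}$. Hence $ba=1$ and $R$ is Dedekind-finite.

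For (1)$\Rightarrow$(2), let $u=ab\in U(R)$. The plan is to exhibit both a left and a right inverse of $ba$. Set $c=u^{-1}$. From $a(bc)=1$ and Dedekind-finiteness we get $(bc)a=1$, i.e. $bca=1$. Similarly, $(ca)b=1$ gives $b(ca)=1$, which is the same element. I would then try the candidate $x=bc^2a$ as an inverse of $ba$ and compute
\[
(ba)(bc^2a)=b(ab)c^2a=bca=1,
\]
\[
(bc^2a)(ba)=bc^2(ab)a=bca=1.
\]
Hence $ba\in U(R)$ with $(ba)^{-1}=bu^{-2}a$. This is the ring version of the Cline/Jacobson-type identity.

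The only real step is the use of Dedekind-finiteness to convert $a(bu^{-1})=1$ into $bu^{-1}a=1$. Once that is in hand, the verification is a one-line computation, so I do not expect any genuine obstacle beyond choosing the candidate inverse $bu^{-2}a$.
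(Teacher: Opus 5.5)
Your proof is correct, and it takes a somewhat different route from the paper's.

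For $(1)\Rightarrow(2)$, the paper applies Dedekind-finiteness twice, to $((ab)^{-1}a)b=1$ and to $a(b(ab)^{-1})=1$. From this it concludes that $a$ and $b$ are \emph{individually} units, and $ba$ is then invertible as a product of units, with $(ba)^{-1}=a^{-1}b^{-1}$. You use Dedekind-finiteness only once, to get $b(ab)^{-1}a=1$. You then check directly that $b(ab)^{-2}a$ is a two-sided inverse of $ba$. Since $a^{-1}=b(ab)^{-1}$ and $b^{-1}=(ab)^{-1}a$, this is the same inverse the paper obtains.

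For $(2)\Rightarrow(1)$, the paper shows that $b$ and then $a$ are units, and cancels $a$ in $a=a(ba)^{-1}$ to get $(ba)^{-1}=1$. You instead note that $ba$ is idempotent when $ab=1$ and that an invertible idempotent equals $1$, which is shorter.

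The paper's route records the stronger fact that in a Dedekind-finite ring $ab\in U(R)$ forces $a,b\in U(R)$. Your relations $a(bc)=1=(bc)a$ already give $a\in U(R)$, though you don't need it. Your route avoids the factorwise argument. It also makes the link to Cline's formula $(ba)^{D}=b\big((ab)^{D}\big)^{2}a$ explicit, which is the theme the paper turns to next.
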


\begin{proof}
$(1) \Rightarrow (2)$: Suppose $ab\in U(R)$. Then there exists an element $u\in R$ satisfying $uab=1$ and $abu=1$.
Since $R$ is Dedekind-finite, the equality $uab=1$ implies $bua=1$.
This shows that $b\in U(R)$ with $b^{-1}=ua$.
Similarly, we deduce that $a\in U(R)$ and $a^{-1}=bu$.
Hence, we can see that $ba\in U(R)$, and its inverse is given by $(ba)^{-1}=a^{-1}b^{-1}$.

$(2) \Rightarrow (1)$: Assume $ab=1$. Then $ab\in U(R)$, so condition (2) guarantees $ba\in U(R)$.
Let $u\in R$ be the inverse of $ba$, so $bau=1$ and $uba=1$.
Combining $ab=1$ and $bau=1$, we have then $b \in U(R)$ and $b^{-1}=a=au$.
Similarly, we can see $a \in U(R)$. Multiplying both sides of $a=au$ by $a^{-1}$ on the left yields $u=1$.
Substituting $u=1$ into $bau=1$, we obtain $ba=1$.
Hence $R$ is Dedekind-finite.
\end{proof}

\begin{remark}
We emphasize that the equality $ab=ba$ does not hold in general under the hypotheses of Proposition \ref{prop2.01}.
It is well known that the full $3\times 3$ matrix ring over the complex field $M_3(\mathbb{C})$ is a Dedekind-finite ring.
Consider the matrices
\[
A=
\begin{pmatrix}
1 & 0 & 1 \\
0 & 1 & 0 \\
0 & 0 & 1
\end{pmatrix},\qquad
B=
\begin{pmatrix}
1 & 0 & 0 \\
0 & 1 & 0 \\
0 & -1 & 1
\end{pmatrix}.
\]
Direct computation shows that both $AB$ and $BA$ are invertible elements of $M_3(\mathbb{C})$, while $AB\neq BA$.
\end{remark}

As a generalization of invertibility, it is natural to consider generalized inverses.
For the problem concerning reversed products of two elements,
the theory of generalized inverses manifests itself precisely as the Cline's formula \cite{C1}.
It is well known that if $ab \in R^{D}$, then $ba\in R^{D}$, with the representation
$$(ba)^{D}=b((ab)^{D})^{2}a.$$

We emphasize here that $ab\in R^{\sharp}$ only guarantees $ba\in R^{D}$, but does not entail $ba \in R^{\sharp}$.
Accordingly, a natural question arises: which rings satisfy the condition
that $ab\in R^\sharp$ implies $ba\in R^\sharp$ for all $a,b\in R$?

\begin{theorem}\label{thm2.04}
Let $R$ be a ring with unity, and let $a,b\in R$. The following statements are equivalent:

(1) $R$ is reversible.

(2) $ab\in R^{\sharp}$ implies $ba\in R^{\sharp}$.
\end{theorem}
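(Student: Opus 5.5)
The implication (2)$\Rightarrow$(1) follows directly from nilpotency. For (1)$\Rightarrow$(2) I would show that the Cline-type candidate $b((ab)^{\sharp})^{2}a$ is actually a group inverse of $ba$, using that idempotents in a reversible ring are central.

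\emph{(2)$\Rightarrow$(1).} Suppose $ab=0$. Since $0\in R^{\sharp}$ with $0^{\sharp}=0$, hypothesis (2) gives $ba\in R^{\sharp}$. Moreover $(ba)^{2}=b(ab)a=0$. A group invertible element $c$ with $c^{2}=0$ must vanish, because
\[
c=c^{2}c^{\sharp}=0 .
\]
Hence $ba=0$, and $R$ is reversible.

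\emph{(1)$\Rightarrow$(2).} First I would record two standard facts about a reversible ring $R$.
\begin{itemize}
\item $R$ is semicommutative: if $ab=0$, then $ba=0$, so $bar=0$ for every $r\in R$, and reversibility gives $arb=0$.
\item Every idempotent $e$ of $R$ is central. From $e(1-e)=0$ and semicommutativity we get $er(1-e)=0$, so $ere=er$. Symmetrically, $(1-e)e=0$ gives $(1-e)re=0$, so $ere=re$. Thus $er=re$.
\end{itemize}

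Now let $ab\in R^{\sharp}$ and put $x=(ab)^{\sharp}$ and $e=abx=xab$. Then $e$ is an idempotent, hence central, and it satisfies $ex=xe=x$ and $e\,ab=ab$. The key auxiliary identity is $e\,ba=ba$. To see it, note that $(1-e)ab=0$, that is, $\big((1-e)a\big)b=0$. Reversibility gives $b(1-e)a=0$, and centrality of $e$ turns this into $(1-e)ba=0$.

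Next set $y=bx^{2}a$ and verify the three group-inverse equations for $ba$.
\begin{itemize}
\item Commutation: $ba\,y=b(ab)x^{2}a=bxa$, and $y\,ba=bx^{2}(ab)a=bxa$, so $ba\,y=y\,ba$.
\item Second equation: $y\,ba\,y=bx^{2}(ab)^{2}x^{2}a=bx^{2}a=y$, using $x^{2}(ab)^{2}=e$ and $ex^{2}=x^{2}$.
\item First equation: $ba\,y\,ba=b(ab)x^{2}(ab)a=bea=e\,ba$, using centrality of $e$. By the auxiliary identity this equals $ba$.
\end{itemize}
Therefore $ba\in R^{\sharp}$ with $(ba)^{\sharp}=b((ab)^{\sharp})^{2}a$, in agreement with Cline's formula.

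The main obstacle is the first group-inverse equation, which requires showing $ba\,y\,ba=ba$, i.e.\ $e\,ba=ba$. This is exactly where reversibility is used twice: once through the centrality of idempotents, and once to pass from $(1-e)ab=0$ to $(1-e)ba=0$.
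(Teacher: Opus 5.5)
Your proof is correct and is essentially the paper's argument. You use the same candidate $b((ab)^{\sharp})^{2}a$ and the same key identity $ba=bab(ab)^{\sharp}a$ (your $b(1-e)a=0$), obtained from one use of reversibility; the only difference is that you check $bay=yba$ and $ybay=y$ by hand where the paper cites Cline's formula.

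One simplification: the centrality of idempotents is never needed. The relation $b(1-e)a=0$ already says $bea=ba$, which is exactly what your first group-inverse equation reduces to.
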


\begin{proof}
$(1)\Rightarrow(2)$: Suppose $ab\in R^\sharp$. By the Cline's formula, $ba\in R^D$ and the Drazin inverse satisfies the identity
\[
(ba)^D = b\big((ab)^\sharp\big)^2 a.
\]
To verify $ba\in R^\sharp$, it suffices to check the group invertibility condition $ba = (ba)^2(ba)^D$.
Since $ab$ is group invertible, we have $a\big(1 - b(ab)^\sharp a\big)b = 0$.
As $R$ is reversible, $ba\big(1 - b(ab)^\sharp a\big) = 0$, which yields $ba = bab(ab)^\sharp a$.
Substituting the expression for $(ba)^D$, we compute
\[
(ba)^2(ba)^D = (ba)^2 b\big((ab)^\sharp\big)^2 a = bab(ab)^\sharp a = ba.
\]
Therefore, $ba\in R^\sharp$.

$(2)\Rightarrow(1)$: Assume $ab=0$. Trivially, $ab\in R^\sharp$, so condition (2) forces $ba\in R^\sharp$.
Then $ba=(ba)^{2}(ba)^{\sharp}=0$. Hence, $R$ is reversible.
\end{proof}

It is well-known that a ring $R$ is strongly regular if and only if every element in $R$ is group invertible.
Hence, the following corollary naturally follows from Theorem \ref{thm2.04}.

\begin{corollary}
Every strongly regular ring is reversible.
\end{corollary}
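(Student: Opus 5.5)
The plan is to feed the hypothesis of strong regularity straight into the implication $(2)\Rightarrow(1)$ of Theorem \ref{thm2.04}. We use the characterization recalled just before the corollary: $R$ is strongly regular exactly when every element of $R$ is group invertible, that is, $R^{\sharp}=R$.

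The key steps are as follows. Let $R$ be strongly regular and take arbitrary $a,b\in R$ with $ab\in R^{\sharp}$. Since $R^{\sharp}=R$, the product $ba$ is automatically in $R^{\sharp}$. So condition (2) of Theorem \ref{thm2.04} holds trivially for all $a,b\in R$, and the theorem gives that $R$ is reversible.

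For a direct check that avoids the theorem, suppose $ab=0$. Then $ba\in R^{\sharp}$, so
\[
ba=(ba)^{2}(ba)^{\sharp}=b(ab)a(ba)^{\sharp}=0 .
\]
Hence $R$ is reversible.

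There is no real obstacle. The only point needing justification is the standard equivalence between strong regularity ($a\in a^{2}R$ for all $a$) and group invertibility of every element. The paper quotes this as well known, so we simply invoke it.
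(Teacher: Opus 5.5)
Your proposal is correct and follows the paper's route exactly. Strong regularity gives $R^{\sharp}=R$, so condition (2) of Theorem \ref{thm2.04} holds trivially and reversibility follows; your direct check $ba=(ba)^{2}(ba)^{\sharp}=b(ab)a(ba)^{\sharp}=0$ is just the paper's own proof of $(2)\Rightarrow(1)$.
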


\begin{remark}
It is known that a ring $R$ is reversible if and only if $ab\in E(R)$ implies $ba\in E(R)$.
This motivates us to consider the following equivalent characterization concerning reversible rings.

Let $R$ be a ring and let $a,b\in R$. Consider the following statements:
\begin{enumerate}
\item[(1)] $R$ is reversible.
\item[(2)] $ab\in E(R)$ implies $ba\in R^{\sharp}$.
\item[(3)] $ab\in R^{\sharp}$ implies $ba\in E(R)$.
\end{enumerate}

We now verify the relations among these three conditions.

$(1)\Rightarrow(2)$ Suppose that $ab\in E(R)$. Then $ab\in R^{\sharp}$. By Theorem \ref{thm2.04}, we have $ba\in R^{\sharp}$.

$(2)\Rightarrow(1)$ If $ab=0$, then $ab\in E(R)$. Condition $(2)$ yields $ba\in R^{\sharp}$.
It follows that $ba = b(ab)a(ba)^{\sharp}=0$, so $R$ is reversible.

$(3)\Rightarrow(1)$ Let $ab\in R^{\sharp}$. By condition $(3)$, we obtain $ba\in E(R)$, which implies $ba\in R^{\sharp}$.
Thus $R$ is reversible by Theorem \ref{thm2.04}.

Nevertheless, the implication $(1)\Rightarrow(3)$ fails in general.
In fact, it is not difficult to prove that condition $(3)$ is equivalent to $R^{\sharp}\subseteq E(R)$.
We illustrate this by two examples.

First, a strongly regular ring may not satisfy $(3)$.
Take $R=\mathbb{Z}_{3}$. The element $[2]\in R^{\sharp}$, but $[2]\notin E(R)$.
This example also demonstrates that the implication $(1)\Rightarrow(3)$ does not hold in general.

Second, a ring satisfying $(3)$ need not be strongly regular.
For $R=\mathbb{F}_{2}$, we have $R^{\sharp}=E(R)=\{0,1\}$, so condition $(3)$ holds,
while $\mathbb{F}_{2}$ is not strongly regular. Indeed, $x$ is group invertible.
\end{remark}

\begin{remark}
(1) We remark that the equality $ab=ba$ does not generally hold under the setting of Theorem \ref{thm2.04}.
Indeed, non-commutative strongly regular rings exist.
Let $C(R)$ stand for the centre of $R$.
Nevertheless, we claim that $ab=ba$ whenever $ab\in C(R)$.

Since $ba\in R^{\sharp}$, the idempotent $ba(ba)^{\sharp}$ belongs to $C(R)$, because every reversible ring is Abelian.
Direct computation yields
\[
\begin{aligned}
ba &= b(ab)a(ba)^{\sharp}=(ab)ba(ba)^{\sharp}=(ab)^{\sharp}abab\bigl[ba(ba)^{\sharp}\bigr] \\
&=(ab)^{\sharp}aba\bigl[ba(ba)^{\sharp}\bigr]b=(ab)^{\sharp}abab=ab.
\end{aligned}
\]

(2) As shown in \cite{N1}, any group invertible element $r$ admits the decomposition $r=ue=eu$, where $u\in U(R)$ and $e\in E(R)$.
Accordingly, condition (2) can be restated as:
$abe_{1}\in R^{\sharp}$ implies $bae_{2}\in R^{\sharp}$ for certain idempotents $e_{1}, e_{2} \in E(R)$.
This research direction will be presented in the following sections.
\end{remark}

In \cite{CV1}, the author introduced the notions of $e$-reversible rings and strongly $e$-reversible rings via an idempotent element $e$, and investigated their basic properties together with the connections between these rings, reversible rings and symmetric rings.
In particular, it was proved that the idempotent $e$ is left semicentral whenever $R$ is an $e$-reversible ring.
Recall that an idempotent $e$ is said to be left semicentral if $(1-e)re=0$ holds for all $r\in R$.
Moreover, the idempotent $e\in C(R)$ provided that $R$ is strongly $e$-reversible.
The author also established the equivalence between $e$-reversible rings and strongly $e$-reversible rings: $R$ is strongly $e$-reversible if and only if $R$ is $e$-reversible and $e\in C(R)$.

In the subsequent part of this section, we present characterizations for $e$-reversible and strongly $e$-reversible rings by virtue of the transfer property for reversed products of group invertible elements and idempotents.

\begin{definition}
Let $R$ be a ring and $e\in E(R)$. Then $R$ is called $e$-reversible \cite{CV1}
if for all $a,b\in R$, $ab = 0$ implies $bae = 0$.
\end{definition}

\begin{lemma}\label{lem2.09}
{\rm(\cite{CV1}, Theorem 2.4)} If $R$ is $e$-reversible, then $e$ is left semicentral.
\end{lemma}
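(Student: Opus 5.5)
We need to prove Lemma \ref{lem2.09}: if $R$ is $e$-reversible, then $(1-e)re=0$ for every $r\in R$. The plan is to feed suitable zero products into the defining implication ``$ab=0\Rightarrow bae=0$'' so that the output is exactly $(1-e)re$ after trimming idempotent factors.

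The natural zero product to use is $e(1-e)=0$. Fix $r\in R$ and take
\[
a=e,\qquad b=(1-e)r .
\]
Then
\[
ab=e(1-e)r=0,
\]
since $e$ is idempotent. By $e$-reversibility, $bae=0$. Substituting gives
\[
(1-e)r\,e\,e=(1-e)re=0,
\]
again because $e^{2}=e$. Since $r$ is arbitrary, this is precisely the statement that $e$ is left semicentral.

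The only thing to check is that the factor produced by the definition lands on the correct side. The definition appends $e$ on the right of $ba$. We therefore want $ba$ to have the form $(1-e)r\cdot(\text{something ending in }e)$, with $ab$ vanishing because $e(1-e)=0$. The choice above achieves this: $a=e$ is absorbed into the trailing $e$, and $b=(1-e)r$ supplies the prefix.

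An alternative choice such as $a=r e$, $b=1-e$ would instead yield $(1-e)re\cdot e$ from the condition $re(1-e)=0$. That premise fails in general, so it is not usable; this is the one place where some care in choosing the pair is needed. No earlier result of the paper is required, only idempotency of $e$ and the definition.
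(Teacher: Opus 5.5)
Your argument is correct, and it is the same substitution the paper uses. The paper only cites this lemma from \cite{CV1}, but the preliminary claim in the proof of Theorem~\ref{thm2.10} takes $a=e$, $b=(1-e)xe$, which differs from your $b=(1-e)r$ only by a harmless trailing $e$. One correction to your side remark: $re(1-e)=r(e-e^{2})=0$ always holds, so the pair $a=re$, $b=1-e$ is also usable and gives $(1-e)re\cdot e=(1-e)re=0$ directly.
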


In what follows, we give several characterizations of $e$-reversible rings.

\begin{theorem}\label{thm2.10}
Let $R$ be a ring, and let $e\in E(R)$. The following statements are equivalent:

(1) $R$ is $e$-reversible.

(2) $abe=0$ implies $bae\in E(R)$.

(3) $abe=0$ implies $bae\in R^{\sharp}$.

(4) $abe=0$ implies $beae=aebe$.

(5) $ab=0$ implies $bae\in E(R)$.

(6) $ab=0$ implies $bae\in R^{\sharp}$.

(7) $ab=0$ implies $beae=aebe$.
\end{theorem}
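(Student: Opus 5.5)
The plan is to use Lemma \ref{lem2.09} to make $e$ left semicentral, that is, $re=ere$ for every $r\in R$, and then check each implication by a short computation. The trivial implications are $(2)\Rightarrow(3)$, since $E(R)\subseteq R^{\sharp}$, and $(4)\Rightarrow(7)$, $(2)\Rightarrow(5)$, $(3)\Rightarrow(6)$, since $ab=0$ forces $abe=0$. Also $(5)\Rightarrow(6)$ holds because $E(R)\subseteq R^{\sharp}$. It then suffices to prove $(1)\Rightarrow(2)$, $(1)\Rightarrow(4)$, $(6)\Rightarrow(1)$ and $(7)\Rightarrow(1)$.

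For $(1)\Rightarrow(2)$ and $(1)\Rightarrow(4)$, assume $R$ is $e$-reversible and $abe=0$. Apply the definition to the pair $(a,be)$: since $a(be)=0$, we get $beae=0$. By Lemma \ref{lem2.09}, $ae=eae$, so $bae=b(eae)=beae=0\in E(R)$, which gives (2). For (4), we already have $beae=0$. Moreover $be=ebe$, so $aebe=abe=0$, and hence $beae=aebe$.

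The main work is the converse directions, where semicentrality must first be recovered from the weaker hypotheses. For $(6)\Rightarrow(1)$, fix $r\in R$ and take $p=(1-e)re$ and $q=1-e$. Then $pq=(1-e)re(1-e)=0$, so by (6) the element $qpe=(1-e)re$ lies in $R^{\sharp}$. Its square is $0$, and a group invertible element $x$ with $x^2=0$ satisfies $x=x^2x^{\sharp}=0$. Hence $(1-e)re=0$, so $e$ is left semicentral.

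Now let $ab=0$ and put $x=bae\in R^{\sharp}$. Semicentrality gives $bae=ebae$. Therefore
\[
x^{2}=ba(ebae)=ba(bae)=b(ab)ae=0,
\]
and so $x=x^{2}x^{\sharp}=0$, which shows $R$ is $e$-reversible.

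For $(7)\Rightarrow(1)$, fix $r\in R$ and use the pair $a=e$, $b=(1-e)re$, for which $ab=0$. Condition (7) yields $beae=(1-e)re$ and $aebe=e(1-e)re=0$, so $(1-e)re=0$ and again $e$ is left semicentral. Then for $ab=0$ we have $beae=bae$ (from $ae=eae$) and $aebe=abe=0$ (from $be=ebe$). So (7) gives $bae=0$, which is $e$-reversibility.

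The only real obstacle is choosing the test pairs $\bigl((1-e)re,\,1-e\bigr)$ and $\bigl(e,\,(1-e)re\bigr)$ so that the weak conditions (6) and (7) force $e$ to be left semicentral. Once that is in place, every remaining step is a direct computation.
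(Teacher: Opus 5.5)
Your proof is correct. It uses the same two ingredients as the paper: recovering left semicentrality of $e$ from a test pair, followed by short computations. The difference is in how the implications are organized.

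The paper runs a single cycle $(1)\Rightarrow(2)\Rightarrow\cdots\Rightarrow(7)\Rightarrow(1)$. This forces it to show up front that each of $(2)$--$(7)$ makes $e$ left semicentral, using the uniform pair $a=e$, $b=(1-e)xe$. Its step $(3)\Rightarrow(4)$ (and the analogous $(6)\Rightarrow(7)$) must also work under the weaker hypothesis $abe=0$. There it rewrites $(bae)^2=b(aebe)ae=b(abe)ae$ via $ae=eae$ and $be=ebe$.

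You instead prove $(1)\Rightarrow(2)$ and $(1)\Rightarrow(4)$ directly, by applying $e$-reversibility to the pair $(a,be)$. The trivial implications then carry $(2),(3),(5)$ into $(6)$ and $(4)$ into $(7)$, and you close the loop only through $(6)\Rightarrow(1)$ and $(7)\Rightarrow(1)$. As a result, you extract semicentrality from just the two weakest conditions. Every converse computation is done under $ab=0$, where $(bae)^2=b(ab)ae=0$ is immediate.

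The paper's arrangement gives a single linear cycle, at the cost of the extra preliminary claim and the slightly more delicate $(3)\Rightarrow(4)$ step. Your arrangement needs a few more arrows but each is shorter.

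Your test pair $\bigl((1-e)re,\,1-e\bigr)$ for $(6)$ differs from the paper's. The pair $\bigl(e,\,(1-e)re\bigr)$ that you already use for $(7)$ would work equally well there.
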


\begin{proof}
We first establish a preliminary claim: each of conditions $(2)\sim(7)$ implies that the idempotent $e$ is left semicentral.

To justify this claim, set $a=e$ and $b=(1-e)xe$. Then $ab=abe=0$, which gives $bae=beae=(1-e)xe$. We consider three cases:
\[
\begin{cases}
\text{If } bae\in R^{\sharp}, & \text{then } (1-e)xe=\bigl[(1-e)xe\bigr]^2\bigl[(1-e)xe\bigr]^{\sharp}=0;\\[4pt]
\text{If } bae\in E(R), & \text{then } (1-e)xe=\bigl[(1-e)xe\bigr]^2=0;\\[4pt]
\text{If } beae=aebe, & \text{then } (1-e)xe=e(1-e)xe=0.
\end{cases}
\]

$(1)\Rightarrow(2)$ Trivial. Suppose $a(be)=0$. The $e$-reversibility of $R$ yields $(be)ae=0$.
By Lemma \ref{lem2.09}, $e$ is left semicentral. Thus $bae=beae=0\in E(R)$.

$(2)\Rightarrow(3)$ This implication holds trivially since $E(R)\subseteq R^{\sharp}$.

$(3)\Rightarrow(4)$ Let $abe=0$. Condition $(3)$ ensures $bae\in R^{\sharp}$.
As $e$ is left semicentral, we compute
\[
bae=baebae(bae)^{\sharp}=babeae(bae)^{\sharp}=0.
\]
Moreover, $beae=bae=0$ and $aebe=abe=0$, so $beae=aebe$.

$(4)\Rightarrow(5)$ If $ab=0$, then $abe=0$. Applying condition $(4)$, we obtain $beae=aebe$.
Recalling that $e$ is left semicentral, we have
\[
bae=beae=aebe=abe=0\in E(R).
\]

The implications $(5)\Rightarrow(6)$ and $(6)\Rightarrow(7)$ follow by arguments analogous to those for $(2)\Rightarrow(3)$ and $(3)\Rightarrow(4)$, respectively.

$(7)\Rightarrow(1)$ Assume $ab=0$. Then $beae=aebe$ by condition $(7)$.
Since $e$ is left semicentral, it follows that
\[
bae=beae=aebe=abe=0,
\]
which verifies that $R$ is $e$-reversible.
\end{proof}

Next, we replace the zero-product conditions $ab=0$ and $abe=0$ with idempotent conditions.
Specifically, we investigate the settings where $ab\in E(R)$ and $abe\in E(R)$.
Before stating our main results, we first prove an auxiliary lemma for $e$-reversible rings.

\begin{lemma}\label{lem2.11}
If $R$ is $e$-reversible, then $fre = rfe$ for all $f\in E(R)$ and $r\in R$.
\end{lemma}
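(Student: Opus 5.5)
Lemma~\ref{lem2.09} tells us that $e$ is left semicentral, i.e. $(1-e)re=0$, or equivalently $re=ere$ for every $r\in R$. We then apply the $e$-reversibility hypothesis to suitable zero products built from the idempotent $f$.

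\textbf{Step 1: move $e$ past $f$.} Since $f(1-f)=0$, $e$-reversibility gives $(1-f)fe=0$, which is trivial and useless. The useful zero products are instead $f\cdot\big((1-f)r\big)=0$ and $(1-f)\cdot(fr)=0$.

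\textbf{Step 2: first zero product.} From $f\cdot (1-f)r=0$, $e$-reversibility yields
\[
(1-f)rfe=0, \qquad\text{i.e.}\qquad rfe=frfe.
\]

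\textbf{Step 3: second zero product.} From $(1-f)\cdot fr=0$ we get
\[
fr(1-f)e=0, \qquad\text{i.e.}\qquad fre=frfe.
\]

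\textbf{Step 4: combine.} Steps 2 and 3 together give $fre=frfe=rfe$, which is the claim.

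So the argument needs only the defining implication, applied to the pairs $(f,(1-f)r)$ and $((1-f),fr)$. It does not even use Lemma~\ref{lem2.09}, although that lemma could be invoked as a sanity check via $f=e$: then $ere=re$, which is exactly left semicentrality.

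I expect no real obstacle. The only point to verify is the orientation of the definition: from $ab=0$ we conclude $bae=0$, with $e$ on the right. So $a=f$, $b=(1-f)r$ gives $(1-f)rfe=0$, and $a=1-f$, $b=fr$ gives $fr(1-f)e=0$. Both orientations are correct as written.
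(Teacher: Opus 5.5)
Your proof is correct and is essentially the paper's argument: the paper uses the pairs $(f,(1-f)rf)$ and $(fr(1-f),f)$ rather than your $(f,(1-f)r)$ and $(1-f,fr)$, but both yield the same two identities $(1-f)rfe=0$ and $fr(1-f)e=0$, which combine to give $fre=frfe=rfe$. As you note, Lemma~\ref{lem2.09} is not needed, and the paper's proof does not use it either.
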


\begin{proof}
Let $f\in E(R)$ and $r\in R$. First set $a=f$ and $b=(1-f)rf$. Then $ab=0$, and the $e$-reversibility of $R$ implies $bae=0$, i.e., $(1-f)rfe=0$.
Next, take $a=fr(1-f)$ and $b=f$. Then $ab=0$, so again $bae=0$, which yields $fr(1-f)e=0$.
Combining the two equalities above, we obtain $fre = rfe = frfe$.
\end{proof}

\begin{proposition}\label{prop2.12}
Let $R$ be a ring, and let $e\in E(R)$. The following statements are equivalent:

(1) $R$ is $e$-reversible.

(2) $ab \in E(R)$ implies $bae\in E(R)$.

(3) $ab \in E(R)$ implies $bae\in R^{\sharp}$.

(4) $ab \in E(R)$ implies $bae=abe$.

(5) $abe \in E(R)$ implies $bae\in E(R)$.

(6) $abe \in E(R)$ implies $bae\in R^{\sharp}$.

(7) $abe \in E(R)$ implies $beae=aebe$.
\end{proposition}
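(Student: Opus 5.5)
We will prove (1) equivalent to each of the other conditions, using left semicentrality of $e$ (Lemma \ref{lem2.09}) and Lemma \ref{lem2.11}. The reductions are as follows. First, exactly as in the proof of Theorem \ref{thm2.10}, each of (2)--(7) forces $e$ to be left semicentral, so that $xe=exe$ for all $x\in R$. To see this, take $a=e$ and $b=(1-e)xe$. Then $ab=abe=0\in E(R)$ and $bae=beae=aebe$-type data reduce to $(1-e)xe$, an element of square zero. Hence $(1-e)xe=0$ whether this element is required to be idempotent, group invertible, or to satisfy $beae=aebe$ (in the last case $(1-e)xe=e(1-e)xe=0$). We use this throughout.

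\emph{The implications from (1).} Assume (1) and let $ab\in E(R)$. Since $a(b-bab)=ab-abab=0$, $e$-reversibility gives $(b-bab)ae=0$, that is, $bae=babae$. Left semicentrality gives $ebae=bae$. Therefore $(bae)^2=ba(ebae)=babae=bae$, which is (2). Condition (3) follows trivially from (2). For (5), suppose $abe\in E(R)$ and put $c=be$. Then $ac\in E(R)$, so (2) applied to the pair $(a,c)$ yields $cae=beae=bae\in E(R)$. Condition (6) follows trivially from (5). For (7), we will apply (4) to the pair $(a,be)$ to get $beae=abe\cdot e=abe=aebe$.

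\emph{The converse implications.} Let $ab=0$. From (2) or (3), $bae\in R^{\sharp}$, and then
\[
bae=(bae)^2(bae)^{\sharp}=ba(ebae)(bae)^{\sharp}=b(ab)ae(bae)^{\sharp}=0,
\]
so (1) holds. From (4) we get directly $bae=abe=0$. For (5)--(7), note that $ab=0$ gives $abe=0\in E(R)$, and then we argue as in $(3)\Rightarrow(4)$ of Theorem \ref{thm2.10}. Under (5) or (6) we obtain $bae=babae(\cdots)=0$ as above. Under (7) we obtain $bae=beae=aebe=abe=0$.

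\emph{The main obstacle} is $(1)\Rightarrow(4)$, namely showing $bae=abe$ when $ab\in E(R)$. So far we have $bae=babae$, and symmetrically, from $(a-aba)b=0$, we get $b(a-aba)e=0$, the same identity. My first attempt will be to apply Lemma \ref{lem2.11} with the idempotent $f=ab$, which gives $ab\,r\,e=r\,ab\,e$ for every $r\in R$. With $r=b$ followed by right multiplication, and with $r=ba$, this converts $abe=ababe$ into an expression of the form $b(ab)ae$ or $ba(ab)e$. Failing that, I will apply Lemma \ref{lem2.11} to the idempotent $f=bae$ (already known from (2)) with $r=ab$, obtaining $bae\,ab\,e=ab\,bae$. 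I will then combine this with $bae=babae$ and left semicentrality, aiming to identify both $bae$ and $abe$ with the common element $babe$.
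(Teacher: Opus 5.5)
\textbf{Gap: $(1)\Rightarrow(4)$ is never proved, and the plan you sketch for it cannot work.}

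The rest is correct and matches the paper: the proof that (2)--(7) force $e$ to be left semicentral, $(1)\Rightarrow(2)\Rightarrow(3)$, $(1)\Rightarrow(5)$ via the pair $(a,be)$, $(5)\Rightarrow(6)$, and all the converse implications. Deriving (7) from (4) applied to $(a,be)$ would also be fine, and is shorter than the paper's chain. But because you route (7) through (4), $(1)\Rightarrow(7)$ is left unproved as well.

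Your plan uses only these facts:
\begin{itemize}
\item $bae=babae$ and $bae\in E(R)$;
\item left semicentrality of $e$;
\item Lemma \ref{lem2.11} with $f=ab$ and arbitrary $r$;
\item Lemma \ref{lem2.11} with $f=bae$ and $r=ab$.
\end{itemize}
All of them hold in the paper's first example (row- and column-finite shift matrices with $ab=1$) if you take $e=1$. There $ab=1$ is central and $(ba)^2=b(ab)a=ba$. Yet $bae=ba\neq 1=abe$, so no manipulation of these identities can give $bae=abe$. Your stated target is also false. In $\mathbb{Z}$ with $e=1$ and $a=b=-1$, you have $ab=1\in E(R)$ and $bae=abe=1$, but $babe=-1$.

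\textbf{What is missing.} You need Lemma \ref{lem2.11} with $f=bae$ and $r=b$ (not $r=ab$), which gives $b\cdot bae=bae\cdot be$. In the shift example this relation reads $b^2a=b$, which fails. Combine it with the case $f=ab$, $r=b$ (that is, $b(ab)e=(ab)be$) and with $ae=eae$ and $ebe=be$. This gives the paper's chain:
\[
bae=babae=b(ab)e\,ae=(ab)be\,ae=a\,(b\cdot bae)=a\,(bae)\,be=ababe=abe .
\]
With this inserted, $(1)\Rightarrow(4)$ holds, and your derivation of (7) from (4) then goes through.
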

\begin{proof}
Similar to the proof of Theorem \ref{thm2.10}, each of conditions $(2)\sim (7)$ forces $e$ to be left semicentral.

$(1)\Rightarrow(2)$ Suppose $ab\in E(R)$. Then $(1-ab)ab=0$. Applying condition (1), we obtain $b(1-ab)ae=0$.
Since $e$ is left semicentral, it follows that $bae=babae=(bae)^{2}$.

$(2)\Rightarrow(3)$ This implication holds trivially since $E(R)\subseteq R^{\sharp}$.

$(3)\Rightarrow(1)$ If $ab=0$, then $ab\in E(R)$. By condition (3), $bae\in R^{\sharp}$. Therefore,
\[
bae=(bae)^{2}(bae)^{\sharp}=babae(bae)^{\sharp}=0.
\]

$(1)\Rightarrow(4)$ Let $ab\in E(R)$.
Note that we have already established the equivalence $(1)\Leftrightarrow(2)$, so $bae\in E(R)$ whenever $ab\in E(R)$.
Combined with Lemma \ref{lem2.11}, we deduce
\[
bae=babae=(ab)bae=a(bae)be=ababe=abe.
\]

$(4)\Rightarrow(5)$ Assume $abe \in E(R)$. Condition (4) yields $beae=abe$, which implies $bae=abe\in E(R)$.

$(5)\Rightarrow(6)$ This is immediate since $E(R) \subseteq R^{\sharp}$.

$(6)\Rightarrow(1)$ If $ab=0$, then $abe=0\in E(R)$.
By (6), $bae\in R^{\sharp}$, and hence
\[
bae=(bae)^{2}(bae)^{\sharp}=babae(bae)^{\sharp}=0.
\]

$(1)\Rightarrow(7)$ Suppose $abe\in E(R)$. Note that we have verified $(1)\Leftrightarrow(5)$, so $bae\in E(R)$.
Hence, we can see that
\[
beae=bae=baebae=b(abe)ae=abe(bae)=a(bae)be=abeabe=abe=aebe.
\]

$(7)\Rightarrow(1)$ Let $ab=0$. Then $abe=0\in E(R)$. Condition (7) gives
\[
bae=beae=aebe=abe=0,
\]
which shows that $R$ is $e$-reversible.
\end{proof}

\begin{proposition}\label{prop2.13}
Let $R$ be a ring, and let $e\in E(R)$. The following statements are equivalent:

(1) $R$ is $e$-reversible.

(2) $ab\in R^{\sharp}$ implies $bae\in R^{\sharp}$.

(3) $abe\in R^{\sharp}$ implies $bae\in R^{\sharp}$.

(4) $abe\in R^{\sharp}$ implies $beae\in R^{\sharp}$.
\end{proposition}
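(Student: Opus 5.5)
The plan is to follow the pattern of Theorem \ref{thm2.10} and Proposition \ref{prop2.12}. First I show that each of (2), (3), (4) forces $e$ to be left semicentral. Then I close the cycle by proving $(1)\Rightarrow(2)$ directly, $(1)\Rightarrow(3)$ and $(1)\Rightarrow(4)$ by reduction to (2), and each of (2), (3), (4) $\Rightarrow(1)$. The fact used throughout is that if $e$ is left semicentral then $re=ere$ for all $r$. Consequently $beae=bae$, and more generally $(bae)^n=(ba)^ne$.

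\emph{Left semicentrality.} Take $a=e$ and $b=(1-e)xe$, so that $ab=abe=0\in R^{\sharp}$. Then $bae=beae=(1-e)xe$ has square zero. A group invertible element $t$ with $t^2=0$ satisfies $t=t^2t^{\sharp}=0$, so each of (2), (3), (4) gives $(1-e)xe=0$. If (1) holds, Lemma \ref{lem2.09} gives the same conclusion.

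\emph{The implications $(k)\Rightarrow(1)$.} Let $ab=0$. Since $0\in R^{\sharp}$ and $abe=0$, condition (2) or (3) gives $bae\in R^{\sharp}$. Condition (4) gives $beae\in R^{\sharp}$, which is the same element as $bae$ by left semicentrality. Then
\[
bae=(bae)^2(bae)^{\sharp}=ba(ebae)(bae)^{\sharp}=(ab)ae(bae)^{\sharp}\cdot 0+b(ab)ae(bae)^{\sharp}=0,
\]
where I use $ebae=bae$ to rewrite $(bae)^2=babae$. Hence $R$ is $e$-reversible.

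\emph{The main step $(1)\Rightarrow(2)$.} This is the step where an inverse must be guessed, modelled on Cline's formula and Theorem \ref{thm2.04}. Suppose $ab\in R^{\sharp}$. Since $a\bigl((1-b(ab)^{\sharp}a)b\bigr)=0$, $e$-reversibility yields
\[
(1-b(ab)^{\sharp}a)bae=0,\qquad\text{that is,}\qquad bae=b(ab)^{\sharp}abae.
\]
Set $t=bae$ and take the candidate $y=b\bigl((ab)^{\sharp}\bigr)^2ae=(ba)^De$. I then verify the three defining identities, each time removing interior factors $e$ via $ebae=bae$ and $eb(\cdots)ae=b(\cdots)ae$:
\[
ty=bab\bigl((ab)^{\sharp}\bigr)^2ae=b(ab)^{\sharp}ae,\qquad yt=b\bigl((ab)^{\sharp}\bigr)^2abae=b(ab)^{\sharp}ae,
\]
\[
yty=b(ab)^{\sharp}ab\bigl((ab)^{\sharp}\bigr)^2ae=y,\qquad tyt=b(ab)^{\sharp}abae=bae.
\]
The last equality is the displayed consequence of $e$-reversibility. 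Hence $bae\in R^{\sharp}$ with $(bae)^{\sharp}=y$. The main obstacle is keeping track of where $e$ may be absorbed; this is only legitimate because $e$ is left semicentral.

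\emph{The implications $(1)\Rightarrow(3)$ and $(1)\Rightarrow(4)$.} Suppose $abe\in R^{\sharp}$. Apply the already proved implication $(1)\Rightarrow(2)$ to the pair $a$, $be$: since $a(be)\in R^{\sharp}$, we get $(be)ae\in R^{\sharp}$. This is (4). By left semicentrality $beae=bae$, which gives (3). This closes the cycle of equivalences.
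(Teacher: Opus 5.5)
Your proof is correct. Outside the main step it runs parallel to the paper, which uses:
\begin{itemize}
\item the same test pair $a=e$, $b=(1-e)xe$ to force left semicentrality;
\item the same trick of applying the hypothesis to the pair $a,be$ (in the paper as $(2)\Rightarrow(3)$, in yours as $(1)\Rightarrow(4)$);
\item the same computation $bae=(bae)^2(bae)^{\sharp}=b(ab)ae(bae)^{\sharp}=0$ to return to (1).
\end{itemize}
The paper arranges these as a cycle $(1)\Rightarrow(2)\Rightarrow(3)\Rightarrow(4)\Rightarrow(1)$ rather than your star around (1), which is immaterial.

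The genuine difference is in $(1)\Rightarrow(2)$. Both arguments start from the same identity $bae=bab(ab)^{\sharp}ae$. Your element $(1-b(ab)^{\sharp}a)b$ is literally the paper's $b(1-ab(ab)^{\sharp})$, since $(ab)^{\sharp}$ commutes with $ab$. From there:
\begin{itemize}
\item The paper shows $bae\in(bae)^2R\cap R(bae)^2$. It then uses, without stating it, the standard criterion that $t\in R^{\sharp}$ if and only if $t\in t^2R\cap Rt^2$.
\item You write down $y=b\bigl((ab)^{\sharp}\bigr)^2ae$ and verify $ty=yt$, $yty=y$ and $tyt=t$ directly. Each absorption of an interior $e$ is justified by $ere=re$, and all three identities check out.
\end{itemize}
Your route is self-contained and yields the explicit formula $(bae)^{\sharp}=(ba)^{D}e$, in the spirit of Cline's formula. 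The paper's route is shorter but depends on the external criterion and gives no formula.

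One cosmetic point: the displayed chain in your $(k)\Rightarrow(1)$ contains a stray summand $(ab)ae(bae)^{\sharp}\cdot 0$. It is zero, so nothing breaks, but the line should simply read $bae=(bae)^2(bae)^{\sharp}=b(ab)ae(bae)^{\sharp}=0$.
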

\begin{proof}
Similar to the proof of Theorem \ref{thm2.10}, each of conditions $(2)\sim(4)$ implies that $e$ is left semicentral.
It suffices to set $a=e$ and $b=(1-e)xe$.
Then $(1-e)xe\in R^{\sharp}$, which yields
$(1-e)xe=\bigl[(1-e)xe\bigr]^{2}\bigl[(1-e)xe\bigr]^{\sharp}=0$.

$(1)\Rightarrow(2)$ Suppose $ab\in R^{\sharp}$. Then $ab\bigl[1-ab(ab)^{\sharp}\bigr]=0$. Applying condition (1), we obtain
\[
b\bigl[1-ab(ab)^{\sharp}\bigr]ae=0,
\]
which simplifies to $bae=bab(ab)^{\sharp}ae$. We further derive two inclusions:
\[
bae=babab\bigl[(ab)^{\sharp}\bigr]^2ae=baebaeb\bigl[(ab)^{\sharp}\bigr]^2ae\in (bae)^2R,
\]
and similarly
\[
bae=bab(ab)^{\sharp}ae=bab\bigl[(ab)^{\sharp}\bigr]^3ababae=bab\bigl[(ab)^{\sharp}\bigr]^3abaebae\in R(bae)^2.
\]
These two containments jointly imply $bae\in R^{\sharp}$.

$(2)\Rightarrow(3)$ Assume $abe\in R^{\sharp}$. By condition $(2)$, we have $be\cdot a\cdot e\in R^{\sharp}$.
Since $e$ is left semicentral, it follows that $beae=bae$, so $bae\in R^{\sharp}$.

$(3)\Rightarrow(4)$ This implication is trivial. Since $e$ is left semicentral, we immediately have $bae=beae$.

$(4)\Rightarrow(1)$ Let $ab=0$. Then $abe=0\in R^{\sharp}$. Condition $(4)$ guarantees $bae\in R^{\sharp}$. We compute
\[
bae=(bae)^2(bae)^{\sharp}=babae(bae)^{\sharp}=0.
\]
Hence, $R$ is $e$-reversible, which completes the proof.
\end{proof}

Similarly, the notion of strongly $e$-reversible rings is introduced for central idempotents $e\in C(R)$.

\begin{definition}
Let $R$ be a ring and $e\in E(R)$. Then $R$ is said to be strongly $e$-reversible \cite{CV1}
if for all $a,b\in R$, $ab = 0$ implies $bea = 0$.
\end{definition}

\begin{lemma}\label{lem2.15}
{\rm(\cite{CV1}, Corollary 2.6)} A ring $R$ is strongly $e$-reversible if and only if $R$ is $e$-reversible and $e\in C(R)$.
\end{lemma}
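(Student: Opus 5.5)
We prove Lemma \ref{lem2.15} in two directions. For the forward direction, assume $R$ is strongly $e$-reversible, i.e.\ $ab=0$ implies $bea=0$.

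\textbf{Centrality of $e$.} For arbitrary $x\in R$, we use the two test pairs suggested by the proof of Lemma \ref{lem2.11}.
\begin{itemize}
\item Take $a=e$ and $b=(1-e)xe$. Then $ab=e(1-e)xe=0$. Strong $e$-reversibility gives $bea=(1-e)xe\cdot e\cdot e=(1-e)xe=0$, so $e$ is left semicentral.
\item Take $a=ex(1-e)$ and $b=e$. Then $ab=ex(1-e)e=0$. Strong $e$-reversibility gives $bea=e\cdot e\cdot ex(1-e)=ex(1-e)=0$, so $e$ is right semicentral.
\end{itemize}
Adding the two identities, $xe=exe=ex$ for all $x$, hence $e\in C(R)$.

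\textbf{$e$-reversibility.} If $ab=0$, then $bea=0$ by hypothesis. Since $e$ is central, $bae=bea=0$, so $R$ is $e$-reversible.

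For the converse, assume $R$ is $e$-reversible and $e\in C(R)$. If $ab=0$, then $bae=0$, and centrality of $e$ gives $bea=bae=0$. Hence $R$ is strongly $e$-reversible.

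The only non-formal step is proving $e$ central from strong $e$-reversibility. The key observation is that in the expression $bea$ the idempotent $e$ sits between $b$ and $a$, so placing $e$ in either factor of a zero product yields left and right semicentrality symmetrically. This is exactly what the weaker definition ($bae$) fails to do; there Lemma \ref{lem2.09} only delivers left semicentrality.

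Alternatively, we could deduce the forward direction from the cited result of \cite{CV1} that strong $e$-reversibility forces $e\in C(R)$, but the direct argument above is self-contained.
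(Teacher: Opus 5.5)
Your proof is correct. The paper gives no argument of its own here: it imports the statement from \cite{CV1} (Corollary 2.6), where it rests on the separately quoted fact that strong $e$-reversibility forces $e\in C(R)$.

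Your route is self-contained. The two substitutions $a=e,\ b=(1-e)xe$ and $a=ex(1-e),\ b=e$ give $(1-e)xe=0$ and $ex(1-e)=0$, hence $xe=exe=ex$. These are the same test pairs the paper itself uses in Theorem~\ref{thm2.10} and Lemma~\ref{lem2.11}. Once $e$ is central, both directions of the equivalence reduce to the identity $bea=bae$.

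What your version adds is that it makes the lemma visibly elementary and actually proves the centrality claim the paper only quotes. Your closing remark is also accurate. Under plain $e$-reversibility the second test pair yields only the trivial identity $e\cdot ex(1-e)\cdot e=0$, which is why Lemma~\ref{lem2.09} gives only left semicentrality there.
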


From Lemma \ref{lem2.15}, we obtain the following characterization of strongly e-reversible rings. The proofs are analogous to those of Theorem \ref{thm2.10}, Proposition \ref{prop2.12} and Proposition \ref{prop2.13}, and we leave the details to the reader.

\begin{proposition}\label{prop2.16}
Let $R$ be a ring, and let $e\in E(R)$. The following statements are equivalent:

(1) $R$ is strongly $e$-reversible.

(2) $ab=0$ implies $bea\in E(R)$.

(3) $ab=0$ implies $bea\in R^{\sharp}$.

(4) $ab=0$ implies $aeb=bea$.

(5) $ab \in E(R)$ implies $bea\in E(R)$.

(6) $ab \in E(R)$ implies $bea\in R^{\sharp}$.

(7) $ab \in E(R)$ implies $bea=aeb$.

(8) $ab \in R^{\sharp}$ implies $bea\in R^{\sharp}$.
\end{proposition}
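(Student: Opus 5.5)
The plan is to reduce everything to Lemma \ref{lem2.15}: $R$ is strongly $e$-reversible if and only if $R$ is $e$-reversible and $e\in C(R)$. When $e$ is central we have $bea=bae$ and $aeb=abe$, so conditions (2)--(8) become exactly conditions (5), (6), (7) of Theorem \ref{thm2.10}, conditions (2), (3), (4) of Proposition \ref{prop2.12}, and condition (2) of Proposition \ref{prop2.13}. For (4) one also uses $aeb=abe=0$ when $ab=0$. The proof therefore splits into two parts. First, (1) implies each of (2)--(8): this follows from Lemma \ref{lem2.15} together with those earlier equivalences, after rewriting $bea$ as $bae$. Second, each of (2)--(8) implies (1): here it suffices to show that each condition forces $e\in C(R)$ and that, once $e$ is central, the condition coincides with one of the earlier equivalent forms of $e$-reversibility. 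Lemma \ref{lem2.15} then closes the argument.

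The main step is centrality of $e$, so I would prove it directly under each hypothesis by two substitutions with $ab=0$ (and $ab=0$ is idempotent and group invertible, so it is admissible in (5)--(8) as well).

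First take $a=(1-e)xe$ and $b=e$. Then $ab=0$, and $bea=e(1-e)xe=0$ tells us nothing, so instead take $a=e$ and $b=(1-e)xe$. Again $ab=0$, and now $bea=(1-e)xe$. Under (2) this element is idempotent; under (3), (6) or (8) it is group invertible. In every case its square is $(1-e)xe(1-e)xe=0$, so it must be zero; this is exactly the pattern used in the proof of Theorem \ref{thm2.10}. Under (4) and (7) we get $(1-e)xe=bea=aeb=e(1-e)xe=0$. Either way $e$ is left semicentral.

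Next take $a=ex(1-e)$ and $b=1-e$, so that $ab=0$ and $bea=(1-e)ex(1-e)=0$, which gives nothing. Instead take $a=1-e$ and $b=ex(1-e)$. Then $ab=(1-e)ex(1-e)=0$, and $bea=ex(1-e)e(1-e)=0$, which is still trivial. This is the expected obstacle. The fix is to choose $a=ex(1-e)$ and $b=e$: then $ab=ex(1-e)e=0$, and $bea=e\cdot e\cdot ex(1-e)=ex(1-e)$, whose square is zero. Applying the same case analysis as before (for (4) and (7), $aeb=ex(1-e)e\,e=0$) gives $ex(1-e)=0$, so $e$ is right semicentral as well. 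Hence $e\in C(R)$.

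With $e$ central, each of (2)--(8) reduces to the corresponding earlier condition, and the earlier results give $e$-reversibility. For the ``$ab\in E(R)$'' and ``$ab\in R^\sharp$'' directions out of (1), I would reuse the computations of Propositions \ref{prop2.12} and \ref{prop2.13} verbatim, substituting $bea=bae$ and $aeb=abe$.
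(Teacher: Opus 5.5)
Your proof is correct and follows the route the paper indicates when it leaves the details to the reader. It uses Lemma~\ref{lem2.15}, tests each of (2)--(8) on products $ab=0$ to force $e$ to be left and right semicentral, hence central, and then uses $bea=bae$, $aeb=abe$ to reduce everything to Theorem~\ref{thm2.10} and Propositions~\ref{prop2.12}, \ref{prop2.13}. The only slips are cosmetic and do not affect the final argument: in your two discarded trial substitutions the claim $ab=0$ is false, since $(1-e)xe\cdot e=(1-e)xe$ and $ex(1-e)(1-e)=ex(1-e)$; and condition (5) belongs with (2) in the idempotent case of your case analysis.
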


\section{Characterizations of $\ast$-Reversible Rings by EP Elements}

In the previous section, we studied characterizations of reversible rings in terms of group-invertible elements.
Recall that a ring $R$ is reversible if and only if $ab\in R^{\sharp}$ implies $ba\in R^{\sharp}$ for all $a,b\in R$.
In the present section,
it is natural to ask how to characterize various generalized reversible rings
under the condition that elements are Moore-Penrose invertible.
Since Moore-Penrose invertibility is intrinsically defined for involutive rings,
we are naturally led to consider the corresponding notion of reversible rings, namely $\ast$-reversible rings.
Before proceeding with this topic, we first introduce some basic notation and auxiliary lemmas.

\begin{definition}\label{def3.01}
A $\ast$-ring $R$ is said to be $\ast$-reversible \cite{FN1} if $ab=0$ implies $b^{\ast}a=0$ for all $a, b\in R$.
\end{definition}

\begin{lemma}\label{lem3.02}
{\rm(\cite{FN1}, Proposition 6)}
Every $\ast$-reversible ring is reversible.
\end{lemma}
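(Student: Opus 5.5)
To prove Lemma \ref{lem3.02}, I would show directly that a zero product $ab=0$ forces $ba=0$. The plan is to apply the $\ast$-reversibility condition of Definition \ref{def3.01} twice, inserting the involution in between. I will use only the axioms $(xy)^{\ast}=y^{\ast}x^{\ast}$ and $(x^{\ast})^{\ast}=x$, together with $0^{\ast}=0$, which follows from additivity of the involution.

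Let $a,b\in R$ with $ab=0$. Applying Definition \ref{def3.01} to the pair $(a,b)$ gives $b^{\ast}a=0$. Taking the involution of this equality yields
\[
0=(b^{\ast}a)^{\ast}=a^{\ast}b.
\]
Now I would apply Definition \ref{def3.01} again, this time to the pair $(a^{\ast},b)$. Since $a^{\ast}b=0$, $\ast$-reversibility gives $b^{\ast}a^{\ast}=0$. One more application of the involution then gives
\[
0=(b^{\ast}a^{\ast})^{\ast}=ab.
\]
This only returns the hypothesis, so the choice of pair in the second application has to be adjusted.

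The better choice is to work from $b^{\ast}a=0$ and apply Definition \ref{def3.01} to the pair $(b^{\ast},a)$. This gives
\[
a^{\ast}b^{\ast}=0.
\]
Applying the involution, $(a^{\ast}b^{\ast})^{\ast}=ba$, so $ba=0$. Hence $R$ is reversible.

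The main obstacle is choosing which ordered pair to feed into the definition at the second step. Since the identity $b^{\ast}a=0$ is already a product of the right shape, reusing it directly as the new pair $(x,y)=(b^{\ast},a)$ is exactly what makes the argument close.
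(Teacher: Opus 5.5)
Your argument is correct: applying Definition~\ref{def3.01} first to the pair $(a,b)$ and then to the pair $(b^{\ast},a)$ gives $a^{\ast}b^{\ast}=0$, and taking adjoints yields $ba=(a^{\ast}b^{\ast})^{\ast}=0$. The paper gives no proof of this lemma and only cites \cite{FN1}, so your computation is a complete self-contained justification; in a final write-up, drop the abandoned first attempt with the pair $(a^{\ast},b)$.
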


\begin{definition}\label{def3.03}
Let $R$ be a $\ast$-ring.
An element $r\in R$ is called an \textit{EP element}
if $r$ is both Moore-Penrose invertible and group invertible with $r^{\dagger}=r^{\sharp}$.
Equivalently, $r\in R$ is EP if $r$ is Moore-Penrose invertible and satisfies $rr^{\dagger}=r^{\dagger}r$.
\end{definition}
We denote by $R^{\mathrm{EP}}$ the set of all EP elements in $R$.
Regarding the connections between EP elements and ring structures,
we state the following conclusions, which were proved in our previous paper.
These results will be employed in the proofs of the subsequent theorems.

\begin{lemma}\label{lem3.04}
{\rm(\cite{WQW1}, Theorem 4.7)}
Let $R$ be a $\ast$-ring and let $a\in R$. Then $a$ is $*$-strongly regular if and only if $a$ is an EP element.
\end{lemma}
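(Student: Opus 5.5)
Since the statement concerns $\ast$-strongly regular elements, I would first fix the element-wise definition of \cite{WQW1}, which I take to be: $a\in R$ is $\ast$-strongly regular if there is a projection $p\in P(R)$ with $aR=pR$ and $Ra=Rp$. I have not checked this definition against \cite{WQW1}; if their definition differs, for instance by demanding $ap=pa$, then the only adjustment needed is one more line in the first paragraph below. Granting the definition, the lemma becomes the statement that $a$ generates the same left and right principal ideals as some projection, and I would prove each direction by producing the relevant projection or inverse explicitly.

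\emph{EP $\Rightarrow$ $\ast$-strongly regular.} Let $a\in R^{\mathrm{EP}}$ and put $p=aa^{\dagger}$. The Moore-Penrose equations give $p^{2}=p=p^{\ast}$, so $p\in P(R)$. The EP condition gives $p=a^{\dagger}a$ as well. Then
\[
aR=aa^{\dagger}aR\subseteq aa^{\dagger}R=pR\subseteq aR,
\]
so $aR=pR$. Symmetrically, using $p=a^{\dagger}a$ and $a=aa^{\dagger}a$, we get $Ra=Rp$. If the definition also asks for $ap=pa$, this holds because $ap=a=pa$.

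\emph{$\ast$-strongly regular $\Rightarrow$ EP.} Suppose $p\in P(R)$ satisfies $aR=pR$ and $Ra=Rp$. Then $a=pa=ap$, and we can write $p=ax=za$ for some $x,z\in R$. Hence
\[
a=ap=a^{2}x\in a^{2}R,\qquad a=pa=za^{2}\in Ra^{2}.
\]
By the standard criterion, $a$ is group invertible. Set $e=aa^{\sharp}=a^{\sharp}a$; this is an idempotent with $eR=aR=pR$ and $Re=Ra=Rp$.

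The key step, and the only one needing a small argument, is to show $e=p$. From $eR=pR$ we get $pe=e$ and $ep=p$. From $Re=Rp$ we get $ep=e$ and $pe=p$. Therefore $e=ep=p$. So $aa^{\sharp}=a^{\sharp}a=p$ is self-adjoint. Together with $aa^{\sharp}a=a$ and $a^{\sharp}aa^{\sharp}=a^{\sharp}$, this shows that $a^{\sharp}$ satisfies all four Penrose equations. By uniqueness, $a^{\dagger}=a^{\sharp}$, so $a\in R^{\mathrm{EP}}$.

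I do not expect a real obstacle. The only point needing care is matching the definition of \cite{WQW1} exactly; the computations themselves are routine.
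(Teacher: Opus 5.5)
Your argument is correct for the definition you adopt. It is a genuinely different route from the paper, which gives no proof: the lemma is imported from \cite{WQW1} (Theorem 4.7), and the paper never defines ``$\ast$-strongly regular''.

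Both directions check out. Taking $p=aa^{\dagger}=a^{\dagger}a$ gives $aR=pR$ and $Ra=Rp$. Conversely, the two ideal equalities give $a=pa=ap$ and $p=ax=za$, hence $a\in a^{2}R\cap Ra^{2}$. The idempotents $aa^{\sharp}$ and $p$ generate the same right ideal and the same left ideal, so they coincide. Hence $a^{\sharp}$ satisfies all four Penrose equations.

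What your route buys is a short, self-contained proof that shows what the lemma really uses: one projection generating both principal one-sided ideals of $a$. It also gives Lemma~\ref{lem3.05} at once, via $p=aa^{\sharp}$ and $u=a+1-p$. The citation is shorter, but it leaves the reader without the definition the statement depends on.

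The one point to correct is your claim that a different definition in \cite{WQW1} would cost only one more line. That holds only if their definition implies your two-sided condition. Your converse uses $Ra=Rp$ twice: once to get $a\in Ra^{2}$, and once to force $aa^{\sharp}=p$. Natural one-sided variants make the statement false:
\begin{itemize}
\item In $B(\ell^{2})$, the backward shift $S^{\ast}$ satisfies $S^{\ast}R=R=1\cdot R$, and $1$ commutes with $S^{\ast}$. Yet $S^{\ast}\notin R(S^{\ast})^{2}$, so $S^{\ast}$ is not group invertible.
\item In $M_{2}(\mathbb{C})$ with conjugate transpose, the idempotent $e_{11}+e_{12}$ is group invertible and satisfies $(e_{11}+e_{12})R=e_{11}R$. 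Yet it is not EP.
\end{itemize}
So check that the cited definition contains or implies the two-sided condition before claiming your argument proves the cited lemma. Otherwise you need a different argument.
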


\begin{lemma}\label{lem3.05}
Let $R$ be a $\ast$-ring. If $a$ is $\ast$-strongly regular, then there exist a unit $u$ and a projection $p$ such that $a=up=pu$.
\end{lemma}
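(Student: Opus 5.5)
By Lemma \ref{lem3.04}, $a$ is an EP element, so $a\in R^{\sharp}\cap R^{\dag}$ and $a^{\sharp}=a^{\dag}$. I will write $x=a^{\dag}=a^{\sharp}$. The natural candidate projection is
\[
p=aa^{\dag}=a^{\dag}a .
\]
It is a projection because $p^{2}=aa^{\dag}aa^{\dag}=aa^{\dag}=p$ and $p^{\ast}=(aa^{\dag})^{\ast}=aa^{\dag}=p$. It commutes with $a$, since $ap=a(a^{\dag}a)=a$ and $pa=(aa^{\dag})a=a$.

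For the unit I take the standard choice for group invertible elements (cf. \cite{N1}):
\[
u=a+1-p,\qquad v=a^{\sharp}+1-p .
\]
First I will check that $u$ is invertible with $u^{-1}=v$. Expanding $uv$ gives
\[
uv=aa^{\sharp}+a(1-p)+(1-p)a^{\sharp}+(1-p)^{2}.
\]
The identities needed are $a(1-p)=a-ap=0$, $(1-p)a^{\sharp}=a^{\sharp}-pa^{\sharp}=0$ (using $pa^{\sharp}=a^{\sharp}aa^{\sharp}=a^{\sharp}$), and $(1-p)^{2}=1-p$. With these, $uv=p+1-p=1$. The product $vu=1$ follows symmetrically, using $a^{\sharp}(1-p)=0$ and $(1-p)a=0$.

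Next I verify the factorization. We have $up=ap+(1-p)p=a+0=a$, and similarly $pu=pa+p(1-p)=a$. Hence $a=up=pu$ with $u\in U(R)$ and $p\in P(R)$.

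The only step with real content is ensuring $p$ is self-adjoint and simultaneously commutes with $a$. This is exactly where EP (equivalently, $\ast$-strong regularity via Lemma \ref{lem3.04}) is needed: $aa^{\dag}$ is always self-adjoint, and $aa^{\sharp}$ always commutes with $a$. Only when $a^{\dag}=a^{\sharp}$ does a single idempotent have both properties. Everything else is routine algebra with the defining equations of the group inverse.
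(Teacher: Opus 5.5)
Your proposal is correct and matches the paper's proof. It uses the same choices $p=aa^{\dag}$, $u=a+1-p$ and $u^{-1}=a^{\sharp}+1-p$, obtained from $a^{\sharp}=a^{\dag}$; the paper states the conclusion $a=up=pu$ without checking it, while you carry out the routine verifications of $uv=vu=1$ and $up=pu=a$.
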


\begin{proof}
If $a$ $\ast$-strongly regular, then $a^{\sharp}$ and $a^{\dag}$ both exist and $a^{\sharp}=a^{\dag}$.
Set $p=aa^{\dag}$ and $u=a+1-p$.
Then $u\in U(R)$ and $u^{-1}=a^{\sharp}+1-p$.
Thus, $a=up=pu$.
\end{proof}

We now address the conjecture mentioned above:
\begin{center}$R$ is $\ast$-reversible if and only if $ab\in R^{\mathrm{EP}}$ implies $b^{*}a\in R^{\mathrm{EP}}$.\end{center}

At this point, some explanation is needed for our choice of EP elements in place of mere Moore-Penrose invertibility.
Indeed, we have the following result concerning $\ast$-reversible rings.

\begin{lemma}\label{lem3.06}
If $R$ is $*$-reversible, then every regular element of $R$ is an EP element.
\end{lemma}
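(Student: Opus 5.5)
Let $a$ be regular, with $axa=a$ for some $x\in R$. I plan to exhibit $a^{\dagger}$ directly and check that it commutes with $a$, so that $a$ is EP by Definition \ref{def3.03}. The tools are $\ast$-reversibility itself and Lemma \ref{lem3.02}: since $R$ is reversible, it is Abelian, so every idempotent of $R$ is central.

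\emph{Step 1.} The elements $e=ax$ and $f=xa$ are idempotents, hence central. So $a=ea=ae$ and $a=fa=af$.

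\emph{Step 2.} I show that $e$ is a projection. From $(1-e)a=0$, $\ast$-reversibility applied to the pair $(1-e,a)$ gives $a^{\ast}(1-e)=0$. Taking adjoints yields $(1-e^{\ast})a=0$, so $a=e^{\ast}a$. Now $e=ax=e^{\ast}ax=e^{\ast}e$. Applying $\ast$ gives $e^{\ast}=e^{\ast}e=e$. In the same way, from $a(1-f)=0$ we get $(1-f)^{\ast}a=0$, hence $a=f^{\ast}a$. Then $f=xa=xf^{\ast}a$. I would rather argue symmetrically, using $e=f$ below, which makes this unnecessary.

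\emph{Step 3.} I show that $e=f$. We have $e\cdot a=a$, so $e\,xa=xa$ by centrality of $e$: indeed $exa=xea=xa$. Hence $f=ef$. Symmetrically, $fe=f\,ax=a f x=ax=e$, using centrality of $f$ and $af=a$. By centrality, $ef=fe$, so $e=f$.

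\emph{Step 4.} Set $y=xax$. Then $aya=a$ and $yay=y$. Also $ay=axax=e$ and $ya=xaxa=f=e$. Since $e$ is a projection by Step 2, $y$ satisfies all four Penrose equations, so $y=a^{\dagger}$ and $aa^{\dagger}=a^{\dagger}a$. By Definition \ref{def3.03}, $a$ is EP.

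The step I expect to be the main obstacle is Step 2, obtaining self-adjointness of $e$. The key is applying $\ast$-reversibility to $(1-e)a=0$ and then using centrality to conclude $e^{\ast}=e$, so I will double-check the adjoint manipulation there carefully.
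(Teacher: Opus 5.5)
Your proof is correct and follows essentially the same route as the paper: the same candidate $y=xax$, reversibility to force $ax=xa$, and $\ast$-reversibility to make $ax$ a projection. The one difference is in obtaining $ax=xa$: you use the Abelian property of reversible rings, which the paper asserts elsewhere but never proves, whereas the paper derives it directly from $a(1-ax)=0=(1-xa)a$, i.e.\ $a=a^{2}x=xa^{2}$ and hence $xa=xa^{2}x=ax$ (note also that your Step 2 never uses centrality, contrary to your closing remark).
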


\begin{proof}
Let $a\in R$ be a regular element.
Then there exists $x\in R$ satisfying $axa=a$.
We obtain $(1-ax)a=0$ and $a(1-xa)=0$.
Since $R$ is $\ast$-reversible, by Lemma \ref{lem3.02}, $(1-ax)a=0$ yields $a(1-ax)=0$.
Similarly, $a(1-xa)=0$ implies $(1-xa)a=0$.
Thus, $a=a^{2}x=xa^{2}$, and consequently,
\[
xa=xa^{2}x=ax.
\]
Put $y=xax$. Direct computation gives
\[
aya=axaxa=a,\qquad
yay=xaxaxax=xax=y,
\]
\[
ay=axax=ax=xa=xaxa=ya.
\]
Thus $a$ is group-invertible with group inverse $y$.

Now observe $(1-ay)ay=0$.
By the $*$-reversibility of $R$, $(ay)^{*}(1-ay)=0$,
which yields $(ay)^{*}=(ay)^{*}ay$.
Hence $ay$ is a self-adjoint idempotent.
Recall $ay=ya$, and one can further verify that $y$ serves as the Moore-Penrose inverse of $a$.
Therefore $a^{\sharp}=y=a^{\dagger}$, and $a$ is an EP element.
\end{proof}

\begin{theorem}\label{thm3.07}
Let $R$ be a $*$-ring. Then
$R$ is $*$-reversible if and only if $ab\in R^{\mathrm{EP}}$ implies $b^{*}a\in R^{\mathrm{EP}}$.
\end{theorem}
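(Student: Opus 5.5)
Both directions lean on Lemma \ref{lem3.06}: in a $\ast$-reversible ring, regularity already forces the EP property, so the work reduces to showing that $b^{\ast}a$ is regular.

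\emph{Forward direction.} Assume $R$ is $\ast$-reversible and $ab\in R^{\mathrm{EP}}$; write $x=(ab)^{\dagger}=(ab)^{\sharp}$ and $p=abx$, a projection that commutes with $ab$.

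From $ab(1-xab)=0$, Lemma \ref{lem3.02} (reversibility) gives $(1-xab)ab=0$; the useful zero products are of the form $a\,(b(1-p))=0$. Applying $\ast$-reversibility to $a\cdot b(1-p)=0$ gives $\bigl(b(1-p)\bigr)^{\ast}a=0$, that is,
\[
b^{\ast}a=p\,b^{\ast}a=(abx)^{\ast}b^{\ast}a=x^{\ast}b^{\ast}a^{\ast}b^{\ast}a.
\]
Symmetrically, $(1-p)ab=0$, i.e. $\bigl((1-p)a\bigr)b=0$, gives $b^{\ast}(1-p)a=0$, so $b^{\ast}a=b^{\ast}pa=b^{\ast}abxa$.

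I then want an element $y$ with $b^{\ast}a\,y\,b^{\ast}a=b^{\ast}a$. A natural candidate is $y=x\,a^{\ast\ast}$-type expressions; more concretely, I expect something like $y=x^{\ast}$ or $y=(a^{\ast}b)\cdots$ built from $x$ to work. Rather than guess blindly, I would exploit reversibility heavily, since in a reversible ring $ab=0\Leftrightarrow ba=0$ allows elements to be moved around zero products. The aim is to show $b^{\ast}a\in (b^{\ast}a)R\,(b^{\ast}a)$, using the two displayed identities, and then invoke Lemma \ref{lem3.06} to conclude $b^{\ast}a\in R^{\mathrm{EP}}$. Finding the explicit inner inverse is the main obstacle.

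A cleaner route, which I would try first, uses the fact that $\ast$-reversibility makes $ab=0\Rightarrow b^{\ast}a=0$. One shows that the annihilator ideals of $b^{\ast}a$ coincide with those of $p$, and deduces that $b^{\ast}a+1-p$ is a unit, using the decomposition of Lemma \ref{lem3.05} for $ab$. That would give $b^{\ast}a=u p$ with $u$ a unit commuting with $p$, which is EP directly. This requires $p$ to commute with $b^{\ast}a$; this should follow since projections are central in a reversible ring (reversible rings are Abelian).

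\emph{Converse.} Assume the transfer property and let $ab=0$. Since $0\in R^{\mathrm{EP}}$, we get $b^{\ast}a\in R^{\mathrm{EP}}$, so $b^{\ast}a=(b^{\ast}a)^2(b^{\ast}a)^{\sharp}$. This does not immediately vanish, so I use the hypothesis more cleverly: apply it to the pair $(a, b b^{\ast}c)$ and similar substitutions. The simplest attempt is to apply it to the pair $(b^{\ast}a)^{\ast}$-related products, for example $a^{\ast}b$. Alternatively, first show that $R$ is reversible by taking $b\mapsto b^{\ast}$: the hypothesis applied to $ab^{\ast}=0$ gives $ba\in R^{\mathrm{EP}}\subseteq R^{\sharp}$. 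Then $ba=b(ab)^{\ast\ast}\cdots$, and reversibility follows as in Theorem \ref{thm2.04} via $ba=(ba)^{2}(ba)^{\sharp}$, which contains $ab$-type factors once the roles are arranged so that $(ba)^2=b(ab)a$. With reversibility established, $b^{\ast}a=(b^{\ast}a)^{2}(b^{\ast}a)^{\sharp}$ and $(b^{\ast}a)^2=b^{\ast}(ab^{\ast})a$. Here $ab^{\ast}$ is not known to be zero, so I would instead argue $b^{\ast}a\cdot b^{\ast}a=0$ via reversibility moves: $ab=0\Rightarrow ba=0$, and likewise for products involving adjoints. This last adjoint-juggling step is the part I expect to be delicate.
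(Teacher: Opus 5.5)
There are genuine gaps in both directions.

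\emph{Forward direction.} Your reduction to regularity via Lemma~\ref{lem3.06} and your identities $b^{*}a=pb^{*}a=b^{*}pa$ (with $x=(ab)^{\dagger}=(ab)^{\sharp}$, $p=abx$) are the right start. However, you never produce an inner inverse, and neither fallback works.

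\begin{itemize}
\item The guess $y=x^{*}$ is false. In $\mathbb{C}$ with complex conjugation, which is $\ast$-reversible, take $a=i$, $b=1$. Then $x=-i$ and $b^{*}a\,x^{*}\,b^{*}a=i^{3}=-i\neq b^{*}a$.
\item The annihilator route does not close the gap. Having the same annihilators as $p$ does not make $b^{*}a$ invertible in $pRp$; for instance, $2$ and $1$ have the same annihilators in $\mathbb{Z}$. So ``$b^{*}a+1-p$ is a unit'' is precisely what still needs proof.
\end{itemize}

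The missing idea is to use both descriptions $p=abx=xab$ of the EP projection, together with $p=p^{*}$ and the centrality of $p$ (which you correctly note follows from reversibility). Writing $p=p^{*}p=(xab)^{*}p=b^{*}a^{*}x^{*}p$, one gets
\[
b^{*}a=p\,b^{*}a=b^{*}a^{*}x^{*}\,p\,b^{*}a=b^{*}p\,a^{*}x^{*}b^{*}a=(b^{*}a)\bigl(bxa^{*}x^{*}\bigr)(b^{*}a).
\]
This is exactly the paper's computation.

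\emph{Converse direction.} Nothing is actually proved here. Your reversibility step conflates $ab^{*}=0$ with $ab=0$. From $ab^{*}=0$ you get $ba\in R^{\mathrm{EP}}$, but $(ba)^{2}=b(ab)a$ need not vanish. Indeed, ``$ab^{*}=0\Rightarrow ba=0$'' is $\ast$-reversibility itself, i.e.\ the goal.

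Reversibility can in fact be extracted by applying the hypothesis twice. If $ab=0$, then $b^{*}a^{*}=0$, so $ab^{*}\in R^{\mathrm{EP}}$, so $ba\in R^{\mathrm{EP}}$. Since $(ba)^{2}=0$, this forces $ba=0$.

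But even granting reversibility, the final step cannot come from ``reversibility moves'' alone. Take the commutative, hence reversible, ring $\mathbb{C}\times\mathbb{C}$ with $(x,y)^{*}=(y,x)$, and $a=(1,0)$, $b=(0,1)$. Then $ab=0$, yet $b^{*}a=(1,0)$ is a nonzero idempotent. So the EP property of $b^{*}a$ must enter through the self-adjointness of its projection.

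Concretely, write $b^{*}a=up=pu$ (Lemmas~\ref{lem3.04} and \ref{lem3.05}) with $p$ central. Then
\[
p=pp^{*}=p(u^{-1})^{*}a^{*}b=(u^{-1})^{*}a^{*}pb=(u^{-1})^{*}a^{*}u^{-1}b^{*}ab=0 .
\]
The paper gets centrality of projections directly, by feeding the hypothesis pairs for which $b^{*}a$ has zero square: $a=pr(1-p)$, $b=p$, and $a=(1-p)rp$, $b=1-p$. An EP element with zero square is $0$, so both $pr(1-p)$ and $(1-p)rp$ vanish.
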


\begin{proof}
Proof of sufficiency. Suppose that $ab\in R^{\mathrm{EP}}$. In view of Lemma \ref{lem3.06}, it suffices to show that $b^{*}a$ is regular in $R$.

From the identity $\bigl(1-ab(ab)^{\sharp}\bigr)ab=0$, the $*$-reversibility of $R$ yields
\[
b^{*}\bigl(1-ab(ab)^{\sharp}\bigr)a=0.
\]
Expanding this equality, we obtain
\[
b^{*}a=b^{*}ab(ab)^{\sharp}a.
\]
Since $ab\in R^{\mathrm{EP}}$, the projection $ab(ab)^{\sharp}$ is self-adjoint, i.e. $\bigl(ab(ab)^{\sharp}\bigr)^{*}=ab(ab)^{\sharp}$.
Furthermore, every $*$-reversible ring is Abelian, i.e., $E(R) \subseteq C(R)$. Combining this fact, we obtain
\begin{align*}
b^{*}a=\bigl[(ab)^{\sharp}ab\bigr]^{*}\bigl[ab(ab)^{\sharp}\bigr]b^{*}a
=b^{*}a^{*}\bigl((ab)^{\sharp}\bigr)^{*}\bigl[ab(ab)^{\sharp}\bigr]b^{*}a
=b^{*}\bigl[ab(ab)^{\sharp}\bigr]a^{*}\bigl((ab)^{\sharp}\bigr)^{*}b^{*}a.
\end{align*}
This shows that $b^{\ast}a \in (b^{\ast}a)R(b^{\ast}a)$.
So, $b^{\ast}a$ is regular in $R$.

Proof of necessity.
We first claim that every projection in $R$ is central, i.e., $P(R)\subseteq C(R)$.
Take an arbitrary projection $p\in P(R)$ and arbitrary element $r\in R$.
Set $a=pr(1-p)$ and $b=p$.
Then $ab=0\in R^{\mathrm{EP}}$.
By hypothesis, $b^{*}a=pr(1-p)\in R^{\mathrm{EP}}$.
Consequently
\[
pr(1-p)=\bigl[pr(1-p)\bigr]^{2}\bigl[pr(1-p)\bigr]^{\sharp}=0.
\]
Similarly, put $a=(1-p)rp$ and $b=1-p$. Then $(1-p)rp=0$.
Combining the two equalities, we conclude $P(R)\subseteq C(R)$.

We now proceed to prove that $R$ is $*$-reversible.
Suppose $ab=0$. From our hypothesis, it follows that $b^{*}a\in R^{\mathrm{EP}}$.
By Lemma \ref{lem3.04} and Lemma \ref{lem3.05},
there exist $u\in U(R)$ and $p\in P(R)$ satisfying $b^{*}a=up=pu$.
Hence $p=u^{-1}b^{*}a=b^{*}au^{-1}$.
Since $p$ is a projection, $p\in C(R)$, one can see
\begin{align*}
p&=pp^{*}=p\bigl(b^{*}au^{-1}\bigr)^{*} =p(u^{-1})^{*}a^{*}b =(u^{-1})^{*}a^{*}pb=(u^{-1})^{*}a^{*}\bigl(u^{-1}b^{*}a\bigr)b=0.
\end{align*}
Thus $p=0$, which yields $b^{*}a=0$.
This completes the proof.
\end{proof}

It is well known that an element $r$ is EP if and only if $r^{*}$ is EP.
From Theorem \ref{thm3.07}, we derive the following result.

\begin{corollary}\label{cor3.08}
Let $R$ be a $\ast$-ring. Then $R$ is $*$-reversible if and only if $ab\in R^{\mathrm{EP}}$ implies $a^{*}b\in R^{\mathrm{EP}}$.
\end{corollary}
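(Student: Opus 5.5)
The plan is to reduce Corollary \ref{cor3.08} to Theorem \ref{thm3.07}. The tools are the fact, stated just before the corollary, that $r\in R^{\mathrm{EP}}$ if and only if $r^{*}\in R^{\mathrm{EP}}$, together with a substitution of $a^{*}$ for $a$ and $b^{*}$ for $b$. Since $\ast$ is an involution, every pair in $R$ can be written as $(a^{*},b^{*})$, so these substitutions range over all pairs as $a,b$ do.

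First, assume $R$ is $\ast$-reversible and let $ab\in R^{\mathrm{EP}}$. Then $(ab)^{*}=b^{*}a^{*}\in R^{\mathrm{EP}}$. Apply Theorem \ref{thm3.07} to the pair $(b^{*},a^{*})$: the product $b^{*}a^{*}$ is EP, so $(a^{*})^{*}b^{*}=ab^{*}\in R^{\mathrm{EP}}$. This is not yet $a^{*}b$, so we also need the symmetry of $\ast$-reversibility. Indeed, if $xy=0$ implies $y^{*}x=0$ for all $x,y$, then taking adjoints shows that $y^{*}x=0$ implies $x^{*}y=0$. Hence it is more direct to rerun the sufficiency part of Theorem \ref{thm3.07} with $a^{*}b$ in place of $b^{*}a$.

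\begin{itemize}
\item From $ab\bigl(1-(ab)^{\sharp}ab\bigr)=0$, apply $\ast$-reversibility in the form $xy=0\Rightarrow y^{*}x=0$ with $x=a$ and $y=b\bigl(1-(ab)^{\sharp}ab\bigr)$. This gives $\bigl(1-(ab)^{\sharp}ab\bigr)^{*}b^{*}a=0$.
\item Take adjoints to get $a^{*}b\bigl(1-(ab)^{\sharp}ab\bigr)=0$, that is, $a^{*}b=a^{*}b(ab)^{\sharp}ab$.
\item The projection $(ab)^{\sharp}ab=ab(ab)^{\sharp}$ is self-adjoint and central, because $\ast$-reversible rings are reversible (Lemma \ref{lem3.02}) and hence Abelian.
\item Mimic the displayed computation of Theorem \ref{thm3.07} to write $a^{*}b\in (a^{*}b)R(a^{*}b)$. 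This shows $a^{*}b$ is regular.
\item Conclude with Lemma \ref{lem3.06} that $a^{*}b\in R^{\mathrm{EP}}$.
\end{itemize}

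Alternatively, this direction follows from the theorem alone. The implication $ab\in R^{\mathrm{EP}}\Rightarrow b^{*}a\in R^{\mathrm{EP}}$, together with closure of $R^{\mathrm{EP}}$ under $\ast$, gives $a^{*}b=(b^{*}a)^{*}\in R^{\mathrm{EP}}$ at once.

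Conversely, assume $ab\in R^{\mathrm{EP}}\Rightarrow a^{*}b\in R^{\mathrm{EP}}$ for all $a,b$. If $ab\in R^{\mathrm{EP}}$, then $a^{*}b\in R^{\mathrm{EP}}$, hence $b^{*}a=(a^{*}b)^{*}\in R^{\mathrm{EP}}$. This is exactly the condition of Theorem \ref{thm3.07}, so $R$ is $\ast$-reversible.

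So the whole corollary reduces to the observation $(b^{*}a)^{*}=a^{*}b$ together with $\ast$-closure of $R^{\mathrm{EP}}$, and I would present it that way. The only step needing care is the justification that $r\in R^{\mathrm{EP}}$ implies $r^{*}\in R^{\mathrm{EP}}$, which the paper quotes as well known. If pressed, I would check it directly: for EP $r$, the element $(r^{\dagger})^{*}$ serves as both the Moore--Penrose and the group inverse of $r^{*}$, since $r^{*}(r^{\dagger})^{*}=(r^{\dagger}r)^{*}=r^{\dagger}r=rr^{\dagger}=(r^{\dagger}r)^{*}=(r^{\dagger})^{*}r^{*}$.
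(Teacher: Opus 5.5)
Your proposal is correct, and the argument you settle on (Theorem~\ref{thm3.07} combined with $(b^{*}a)^{*}=a^{*}b$ and the closure of $R^{\mathrm{EP}}$ under $\ast$, used once in each direction) is exactly the paper's derivation. The detour through the pair $(b^{*},a^{*})$ and the re-run of the sufficiency computation can be dropped, and in your check of $\ast$-closure the penultimate step should read $rr^{\dagger}=(rr^{\dagger})^{*}=(r^{\dagger})^{*}r^{*}$.
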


In the final section,
we return to the issue concerning Moore-Penrose inverses mentioned earlier and establish the following result.

\begin{proposition}\label{pro3.09}
Let $R$ be a $*$-ring. If $R$ is $*$-reversible,
then $ab\in R^{\dagger}$ implies $b^{*}a\in R^{\dagger}$.
\end{proposition}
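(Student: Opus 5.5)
The plan is to reduce the claim to Theorem \ref{thm3.07} via Lemma \ref{lem3.06}. The key observation is that in a $\ast$-reversible ring every Moore-Penrose invertible element is regular, since $a a^{\dagger} a = a$. By Lemma \ref{lem3.06} it is therefore EP. In other words, $R^{\dagger}\subseteq R^{\mathrm{EP}}$. Conversely, $R^{\mathrm{EP}}\subseteq R^{\dagger}$ holds by definition, so in a $\ast$-reversible ring $R^{\dagger}=R^{\mathrm{EP}}$.

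The proof then runs as follows.
\begin{enumerate}
\item Assume $R$ is $\ast$-reversible and $ab\in R^{\dagger}$.
\item Since $ab\,(ab)^{\dagger}\,ab=ab$, the element $ab$ is regular. By Lemma \ref{lem3.06}, $ab\in R^{\mathrm{EP}}$.
\item By the sufficiency part of Theorem \ref{thm3.07}, $b^{\ast}a\in R^{\mathrm{EP}}$.
\item Every EP element is Moore-Penrose invertible, so $b^{\ast}a\in R^{\dagger}$, as required.
\end{enumerate}

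If one prefers a self-contained argument, one could instead repeat the computation from the sufficiency proof of Theorem \ref{thm3.07} with $(ab)^{\dagger}$ in place of $(ab)^{\sharp}$.
\begin{itemize}
\item From $(1-ab(ab)^{\dagger})ab=0$, $\ast$-reversibility gives $b^{\ast}a=b^{\ast}ab(ab)^{\dagger}a$.
\item The element $ab(ab)^{\dagger}$ is a projection. Since $\ast$-reversible rings are Abelian, it is central.
\item Using $(ab)^{\dagger}ab$ self-adjoint, one can then rewrite $b^{\ast}a$ so that it lies in $b^{\ast}aRb^{\ast}a$. Hence $b^{\ast}a$ is regular, and Lemma \ref{lem3.06} applies.
\end{itemize}

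The only delicate point is this commutation step in the alternative route, namely moving the central projection past $b^{\ast}$. The reduction through Theorem \ref{thm3.07} avoids it entirely, so I would present that reduction as the proof.
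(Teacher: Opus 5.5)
Your proof is correct and is essentially the paper's own argument. Since $ab\,(ab)^{\dagger}\,ab=ab$ makes $ab$ regular, Lemma~\ref{lem3.06} gives $ab\in R^{\mathrm{EP}}$, and the direction of Theorem~\ref{thm3.07} proved under the paper's ``sufficiency'' heading then gives $b^{\ast}a\in R^{\mathrm{EP}}\subseteq R^{\dagger}$; the only difference is that you state the regularity of $ab$ explicitly, whereas the paper uses it implicitly when invoking Lemma~\ref{lem3.06}.
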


\begin{proof}
Suppose that $ab\in R^{\dagger}$.
By Lemma \ref{lem3.06}, then $ab\in R^{\mathrm{EP}}$.
Since $R$ is $*$-reversible, Theorem \ref{thm3.07} gives $b^{*}a\in R^{\mathrm{EP}}$.
So we obtain $b^{*}a\in R^{\dagger}$.
\end{proof}

\begin{remark}\label{rem3.10}
The converse of Proposition \ref{pro3.09} does not hold in general.
We now demonstrate this fact.

Let $R=M_{2}(\mathbb{C})$ with the involution $*$ defined by conjugate transpose.
Since $M_{2}(\mathbb{C})$ is $*$-regular, every complex matrix is Moore-Penrose invertible.
Consequently, the condition $ab\in R^{\dagger}\implies b^{*}a\in R^{\dagger}$ holds trivially.
Take
\[
a=\begin{pmatrix}0&1\\0&0\end{pmatrix},\quad
b=\begin{pmatrix}1&0\\0&0\end{pmatrix}.
\]
Then $ab=0$, whereas $b^{*}a=ba=\begin{pmatrix}0&1\\0&0\end{pmatrix}\ne0$.
Hence $R$ is not $*$-reversible.
\end{remark}

\begin{remark}
Some rings satisfy $ab\in R^{\dagger}\implies b^{*}a\in R^{\dagger}$ yet are not $*$-regular.
Consider $R=\mathbb{Z}$ with the identity involution $n^{*}=n$.
Clearly $R^{\dagger}=\{-1,0,1\}$.
Direct computation verifies the above-mentioned condition.
Meanwhile, $\mathbb{Z}$ fails to be $*$-regular owing to $R\neq R^{\dagger}$.
\end{remark}

In fact, we can obtain the inclusion relations among these classes of rings, as illustrated in the diagram below.

\includegraphics[width=4.5in]{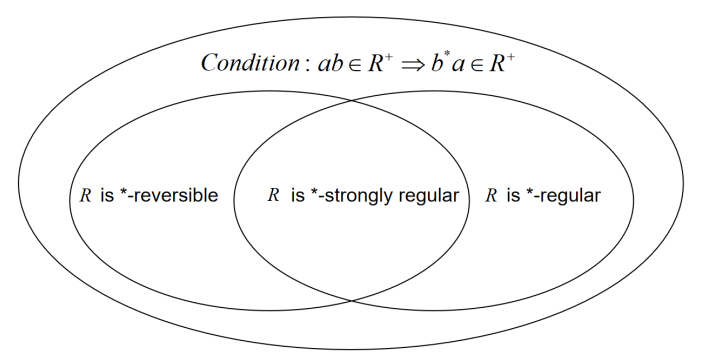}
\vspace{1mm}

Based on the characterizations of $\ast$-reversible rings via Moore-Penrose inverses,
we finally turn to the characterization of reversible rings under Moore-Penrose invertibility.
We first state an auxiliary lemma before presenting our main result.

\begin{lemma}\label{lem3.12}
Let $R$ be a $\ast$-ring. If $R$ is reversible,
then every Moore-Penrose invertible element of $R$ is an EP element.
\end{lemma}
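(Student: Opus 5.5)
Let $a\in R^{\dagger}$ and put $x=a^{\dagger}$. The plan is to show that $x$ commutes with $a$. Once $ax=xa$, the Penrose equations $axa=a$ and $xax=x$ say exactly that $x$ is a group inverse of $a$. Then $a\in R^{\sharp}\cap R^{\dagger}$ with $a^{\sharp}=a^{\dagger}$, i.e.\ $a\in R^{\mathrm{EP}}$, which is the equivalent form recorded in Definition \ref{def3.03}.

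The proof follows the first half of the proof of Lemma \ref{lem3.06}; the only difference is that reversibility now replaces $\ast$-reversibility. From $axa=a$ we have $(1-ax)a=0$ and $a(1-xa)=0$. Reversibility of $R$ turns these into $a(1-ax)=0$ and $(1-xa)a=0$, so
\[
a=a^{2}x=xa^{2}.
\]
Multiplying, we get $xa=x(a^{2}x)=(xa^{2})x=ax$. This is the identity $xa=xa^{2}x=ax$ already obtained in the proof of Lemma \ref{lem3.06}.

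With $ax=xa$ in hand, the remaining checks are immediate: $axa=a$, $xax=x$ and $ax=xa$ are precisely the defining equations of the group inverse. By uniqueness of the group inverse and of the Moore--Penrose inverse, $a^{\sharp}=x=a^{\dagger}$, so $a$ is EP.

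I do not expect a real obstacle. The one point to watch is that we must not invoke the $\ast$-reversibility step from Lemma \ref{lem3.06}, which was used there to make $ay$ self-adjoint. Here self-adjointness of $ax$ is given directly by the Penrose equations. So plain reversibility, which guarantees only commutation, suffices.
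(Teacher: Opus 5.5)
Your proposal is correct and is essentially the paper's proof: reversibility turns $(1-aa^{\dagger})a=0$ and $a(1-a^{\dagger}a)=0$ into $a=a^{2}a^{\dagger}=a^{\dagger}a^{2}$, which gives $a^{\dagger}a=a^{\dagger}a^{2}a^{\dagger}=aa^{\dagger}$. The paper stops at this point and appeals to the equivalent EP criterion in Definition~\ref{def3.03}. Your explicit check of the group-inverse equations simply spells out that step.
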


\begin{proof}
Let $a\in R$ be a Moore-Penrose invertible element.
Then there exists $a^{\dag}\in R$ satisfying $aa^{\dag}a=a$.
Since $R$ is reversible,  $(1-aa^{\dag})a=0$ yields $a(1-aa^{\dag})=0$.
Similarly, $a(1-a^{\dag}a)=0$ implies $(1-a^{\dag}a)a=0$.
Thus, $a=a^{2}a^{\dag}=a^{\dag}a^{2}$, and consequently,
\[
a^{\dag}a=a^{\dag}a^{2}a^{\dag}=aa^{\dag}.
\]
Therefore $a$ is an EP element.
\end{proof}

\begin{proposition}\label{pro3.13}
Let $R$ be a $\ast$-ring. If $R$ is reversible,
then $ab\in R^{\dagger}$ implies $ba\in R^{\dagger}$.
\end{proposition}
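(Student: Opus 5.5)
The plan is to reduce everything to group invertibility, where Theorem \ref{thm2.04} already applies, and then show that the natural idempotent attached to $ba$ is self-adjoint.

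\textbf{Step 1: reduce to group invertibility.} Suppose $ab\in R^{\dagger}$. By Lemma \ref{lem3.12}, $ab$ is an EP element. In particular $ab\in R^{\sharp}$ with $(ab)^{\sharp}=(ab)^{\dagger}$, so $p:=ab(ab)^{\sharp}$ is a projection. Theorem \ref{thm2.04} then gives $ba\in R^{\sharp}$. Cline's formula gives $(ba)^{\sharp}=b\bigl((ab)^{\sharp}\bigr)^{2}a$. It then suffices to show that the idempotent
\[
q:=ba(ba)^{\sharp}=(ba)^{\sharp}ba
\]
is self-adjoint. Indeed, a group inverse $y$ of $ba$ already satisfies the first two Penrose equations, and both $ba\,y$ and $y\,ba$ equal $q$. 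So $q^{\ast}=q$ makes $(ba)^{\sharp}$ the Moore–Penrose inverse of $ba$.

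\textbf{Step 2: show $q=p$.} Reversible rings are Abelian, so both $p$ and $q$ are central idempotents. I expect this identification to be the main obstacle; the rest is formal. Substituting the Cline expression gives
\[
q=bab\bigl((ab)^{\sharp}\bigr)^{2}a=b\,p\,(ab)^{\sharp}a=b(ab)^{\sharp}a .
\]
Using centrality of $p$ together with $p(ab)^{\sharp}=(ab)^{\sharp}=(ab)^{\sharp}p$:
\begin{itemize}
\item $pq=b\,p(ab)^{\sharp}a=q$;
\item $qp=b(ab)^{\sharp}pa=q$.
\end{itemize}
So $q\le p$.

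For the reverse inequality, first note that
\[
aqb=ab(ab)^{\sharp}ab=ab .
\]
Then, using centrality of $q$,
\[
p=(ab)^{\sharp}ab=(ab)^{\sharp}aqb=q\,(ab)^{\sharp}ab=qp .
\]
Combining the two inequalities, $p=qp=q$.

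\textbf{Step 3: conclude.} Since $p$ is a projection, $q=p$ is self-adjoint. Hence $ba\in R^{\dagger}$ with $(ba)^{\dagger}=(ba)^{\sharp}=b\bigl((ab)^{\dagger}\bigr)^{2}a$, and in fact $ba$ is even EP.

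The only facts needed beyond earlier results are that reversible rings are Abelian, which the paper already uses, and the routine identities satisfied by a group inverse. Within Step 2, the step that requires care is the reverse inequality $p\le q$. It relies on inserting $q$ between $a$ and $b$ via $aqb=ab$ and then moving $q$ out using centrality.
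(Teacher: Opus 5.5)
Your proof is correct and follows essentially the paper's route. Lemma \ref{lem3.12} and Theorem \ref{thm2.04} give $ba\in R^{\sharp}$, and then centrality of idempotents in the Abelian ring $R$ shows $ba(ba)^{\sharp}=ab(ab)^{\sharp}$, which is a projection. The only variation is that you get $q=qp$ from Cline's explicit formula $q=b(ab)^{\sharp}a$, while the paper derives it from the reversibility identity $ba=ba\,ab(ab)^{\sharp}$; your reverse step $p=(ab)^{\sharp}aqb=qp$ is the same centrality trick the paper uses.
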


\begin{proof}
Suppose that $ab\in R^{\dagger}$.
By Lemma \ref{lem3.12}, $ab\in R^{\mathrm{EP}}$.
Since $R$ is reversible, Theorem \ref{thm2.04} yields $ba\in R^{\sharp}$.

From the identity $(1-ab(ab)^{\sharp})ab=0$, the reversibility of $R$ gives $b\bigl(1-ab(ab)^{\sharp}\bigr)a=0$,
which simplifies to $ba=bab(ab)^{\sharp}a$.
Recall that every reversible ring is Abelian. Consequently,
\[
ba=b\bigl[ab(ab)^{\sharp}\bigr]a=ba\,ab\,(ab)^{\sharp}.
\]
Furthermore,
\begin{align*}
(ba)(ba)^{\sharp}
&=\bigl[(ba)(ba)^{\sharp}\bigr]ab(ab)^{\sharp} \\
&=\bigl[(ba)(ba)^{\sharp}\bigr]ab\,ab\,(ab)^{\sharp}(ab)^{\sharp} \\
&=aba\bigl[(ba)(ba)^{\sharp}\bigr]b\,(ab)^{\sharp}(ab)^{\sharp} \\
&=ab\,ab\,(ab)^{\sharp}(ab)^{\sharp}=ab(ab)^{\sharp}.
\end{align*}
As $ab\in R^{\mathrm{EP}}$, the projection $ab(ab)^{\sharp}$ is self-adjoint, i.e. $\bigl[(ba)(ba)^{\sharp}\bigr]^{*}=(ba)(ba)^{\sharp}$.
Therefore $ba\in R^{\dagger}$ and $(ba)^{\dagger}=(ba)^{\sharp}$.
\end{proof}

\begin{remark}
The converse of Proposition \ref{pro3.13} does not hold in general.
We verify this claim by means of the counterexample given in Remark \ref{rem3.10}.
Let $R=M_{2}(\mathbb{C})$ be equipped with the involution $*$ defined by conjugate transpose.
Since $M_{2}(\mathbb{C})$ is $*$-regular, the condition $ab\in R^{\dagger}\implies ba\in R^{\dagger}$ holds trivially.
Take
\[
a=\begin{pmatrix}0&1\\0&0\end{pmatrix},\quad
b=\begin{pmatrix}1&0\\0&0\end{pmatrix}.
\]
Then $ab=0$, while $ba=\begin{pmatrix}0&1\\0&0\end{pmatrix}\ne0$.
Consequently, $R$ fails to be reversible.
\end{remark}

By examining the proof of Proposition \ref{pro3.13}, we conclude that reversibility of $R$ yields the implication: $ab\in R^{\mathrm{EP}}\implies ba\in R^{\mathrm{EP}}$.
The subsequent theorem establishes the converse statement under the framework of EP valued elements.

\begin{theorem}\label{thm3.15}
Let $R$ be a $\ast$-ring. Then $R$ is reversible
if and only if $ab\in R^{\mathrm{EP}}$ implies $ba\in R^{\mathrm{EP}}$.
\end{theorem}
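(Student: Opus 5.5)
The forward direction comes directly from the proof of Proposition \ref{pro3.13}. Assume $R$ is reversible and $ab\in R^{\mathrm{EP}}$. Theorem \ref{thm2.04} gives $ba\in R^{\sharp}$. That proof also shows $(ba)(ba)^{\sharp}=ab(ab)^{\sharp}$, and it uses only group invertibility of $ab$ and the fact that reversible rings are Abelian. Since $ab$ is EP, the idempotent $ab(ab)^{\sharp}$ is a projection, so $(ba)(ba)^{\sharp}$ is self-adjoint. Because $(ba)^{\sharp}$ commutes with $ba$, the element $(ba)^{\sharp}$ satisfies all four Penrose equations for $ba$. Hence $ba\in R^{\dagger}$ with $(ba)^{\dagger}=(ba)^{\sharp}$, i.e. $ba\in R^{\mathrm{EP}}$. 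Equivalently, we could note that $ab$ EP gives $ab\in R^{\dagger}$, apply Proposition \ref{pro3.13}, and then Lemma \ref{lem3.12} turns $ba\in R^{\dagger}$ into $ba\in R^{\mathrm{EP}}$.

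For the converse, assume that $ab\in R^{\mathrm{EP}}$ implies $ba\in R^{\mathrm{EP}}$ for all $a,b\in R$. Let $ab=0$. Since $0$ is trivially EP, the hypothesis gives $ba\in R^{\mathrm{EP}}\subseteq R^{\sharp}$. Then, exactly as in Theorem \ref{thm2.04},
\[
ba=(ba)^{2}(ba)^{\sharp}=b(ab)a(ba)^{\sharp}=0 .
\]
So $R$ is reversible. Note that the condition here is $ba$, not $b^{*}a$, so none of the projection-centrality argument from Theorem \ref{thm3.07} is needed.

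The only step needing any care is checking, in the forward direction, that the computation $(ba)(ba)^{\sharp}=ab(ab)^{\sharp}$ from Proposition \ref{pro3.13} really relies only on reversibility and group invertibility of $ab$, and not on anything specific to Moore-Penrose inverses. Once that is confirmed, self-adjointness transfers immediately and the rest is routine.
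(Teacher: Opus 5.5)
Your proposal is correct and matches the paper's proof. The forward direction is taken from the proof of Proposition~\ref{pro3.13}, where $(ba)(ba)^{\sharp}=ab(ab)^{\sharp}$ is derived using only reversibility and group invertibility; your explicit check of that point is what justifies citing it. The converse takes $ab=0\in R^{\mathrm{EP}}$, giving $ba=(ba)^{2}(ba)^{\sharp}=b(ab)a(ba)^{\sharp}=0$.
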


\begin{proof}
\textit{Sufficiency.} The forward direction follows directly from the proof of Proposition \ref{pro3.13}.

\textit{Necessity.} Suppose $ab=0$. Obviously $ab\in R^{\mathrm{EP}}$. By our hypothesis, $ba\in R^{\mathrm{EP}}$.
We then obtain
\[
ba=(ba)^{2}(ba)^{\sharp}=0.
\]
Accordingly, $R$ satisfies the definition of a reversible ring.
\end{proof}

Combining Corollary~4.8, Proposition~4.9 and Lemma~4.12, we obtain the following result.

\begin{corollary}\label{cor3.16}
Let $R$ be a $\ast$-ring. Then $R$ is $\ast$-reversible if and only if
\[
\begin{cases}
ab\in R^{\dagger} \;\Longrightarrow\; b^{\ast}a\in R^{\dagger},\\[2mm]
R^{\dagger}\subseteq R^{EP}.
\end{cases}
\]
\end{corollary}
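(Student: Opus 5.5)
We prove the two directions separately. For the forward direction, assume $R$ is $\ast$-reversible. The first condition is then exactly Proposition \ref{pro3.09}. For the second, Lemma \ref{lem3.02} gives that $R$ is reversible, and Lemma \ref{lem3.12} then yields $R^{\dagger}\subseteq R^{\mathrm{EP}}$. (Alternatively, Lemma \ref{lem3.06} applies directly, since every Moore-Penrose invertible element is regular.)

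For the converse, assume both conditions hold. We aim to show that $ab\in R^{\mathrm{EP}}$ implies $b^{\ast}a\in R^{\mathrm{EP}}$, and then invoke Theorem \ref{thm3.07}. Let $ab\in R^{\mathrm{EP}}$. Since $R^{\mathrm{EP}}\subseteq R^{\dagger}$ by definition, the first condition gives $b^{\ast}a\in R^{\dagger}$. The second condition then upgrades this to $b^{\ast}a\in R^{\mathrm{EP}}$. Theorem \ref{thm3.07} now says $R$ is $\ast$-reversible.

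Alternatively, one could argue directly from $ab=0$. Since $0\in R^{\dagger}$, the first condition gives $b^{\ast}a\in R^{\dagger}$, and the second gives $b^{\ast}a\in R^{\mathrm{EP}}$. One would then repeat the necessity argument of Theorem \ref{thm3.07}, which needs central projections. Routing through Theorem \ref{thm3.07} avoids this.

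The only point needing care is the converse. There we must check that the hypothesis of Theorem \ref{thm3.07} is met for \emph{all} pairs with $ab\in R^{\mathrm{EP}}$, not only for $ab=0$. This is exactly why the second condition is needed: the first condition alone produces only Moore-Penrose invertibility of $b^{\ast}a$, and Remark \ref{rem3.10} shows it cannot suffice on its own. The remaining steps are direct citations.
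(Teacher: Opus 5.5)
Your proof is correct and follows the paper's route. The forward direction is Proposition~\ref{pro3.09} together with Lemmas~\ref{lem3.02} and~\ref{lem3.12}; the converse passes $ab\in R^{\mathrm{EP}}\subseteq R^{\dagger}$ through the two conditions to get $b^{\ast}a\in R^{\mathrm{EP}}$, then applies the EP characterization of $\ast$-reversibility. The paper cites Corollary~\ref{cor3.08} (the $a^{\ast}b$ form) where you cite Theorem~\ref{thm3.07}; yours is the more direct match, since the hypothesis is stated for $b^{\ast}a$.
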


Similarly, combining Lemma~4.12, Proposition~4.13 and Theorem~4.15, we obtain the following.

\begin{corollary}\label{cor3.17}
Let $R$ be a $\ast$-ring. Then $R$ is reversible if and only if
\[
\begin{cases}
ab\in R^{\dagger} \;\Longrightarrow\; ba\in R^{\dagger},\\[2mm]
R^{\dagger}\subseteq R^{EP}.
\end{cases}
\]
\end{corollary}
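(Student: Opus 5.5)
We prove Corollary~\ref{cor3.17} by splitting it into its two directions and assembling results already established in this section.

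\emph{Necessity.} Assume $R$ is reversible. Lemma~\ref{lem3.12} gives $R^{\dagger}\subseteq R^{\mathrm{EP}}$ at once. Proposition~\ref{pro3.13} gives the implication $ab\in R^{\dagger}\Rightarrow ba\in R^{\dagger}$. So both conditions in the display hold.

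\emph{Sufficiency.} Assume both conditions hold. We will verify the criterion of Theorem~\ref{thm3.15}: if $ab\in R^{\mathrm{EP}}$, then $ba\in R^{\mathrm{EP}}$.
\begin{enumerate}
\item[(i)] Suppose $ab\in R^{\mathrm{EP}}$. Every EP element is Moore-Penrose invertible, so $ab\in R^{\dagger}$.
\item[(ii)] The first condition then gives $ba\in R^{\dagger}$.
\item[(iii)] The inclusion $R^{\dagger}\subseteq R^{\mathrm{EP}}$ upgrades this to $ba\in R^{\mathrm{EP}}$.
\end{enumerate}
Theorem~\ref{thm3.15} then yields that $R$ is reversible.

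Alternatively, the sufficiency can be checked directly from the definition of reversibility. Let $ab=0$. Since $0\in R^{\dagger}$, the first condition gives $ba\in R^{\dagger}$, and the second gives $ba\in R^{\mathrm{EP}}\subseteq R^{\sharp}$. Hence
\[
ba=(ba)^{2}(ba)^{\sharp}=b(ab)a(ba)^{\sharp}=0.
\]

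There is no genuine obstacle; the argument only chains earlier results. The one point needing attention is direction bookkeeping. We must not cite Theorem~\ref{thm3.15} for the forward direction, because that implication rests on Proposition~\ref{pro3.13} anyway. The role of the hypothesis $R^{\dagger}\subseteq R^{\mathrm{EP}}$ is precisely to convert the Moore-Penrose transfer into the EP transfer, or directly into group invertibility as in the alternative route. Without it the converse fails, as the $M_{2}(\mathbb{C})$ example shows.
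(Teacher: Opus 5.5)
Your proof is correct and is essentially the paper's own argument. Necessity comes from Lemma~\ref{lem3.12} and Proposition~\ref{pro3.13}, and sufficiency is reduced to the EP-transfer criterion of Theorem~\ref{thm3.15}, whose proof your ``alternative'' route simply inlines (your warning against citing Theorem~\ref{thm3.15} for necessity is unneeded, since combining it with Lemma~\ref{lem3.12} would also be non-circular).
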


\section*{Acknowledge}
The authors sincerely thank Prof. Junchao Wei for his valuable comments and suggestions,
which greatly improved the presentation of this paper.
This work is supported by the National Natural Science Foundation of China (12471133)
and the Anhui Provincial Department of Education Natural Science Research Project (2025AHGXZK30855).

\end{document}